\newtheorem{theorem}{Theorem}[section]
\newtheorem{proposition}[theorem]{Proposition}
\newtheorem{definition}[theorem]{Definition}
\newtheorem{remark}[theorem]{Remark}
\newtheorem{example}[theorem]{Example}
\newtheorem{question}[theorem]{Question}
\newtheorem{corollary}[theorem]{Corollary}
\newtheorem{lemma}[theorem]{Lemma}
\def\R{\mathbb R}
\def\dim{{\rm{dim}}\,}
\def\Ad{\hbox{\rm Ad}}
\def\Aut{\hbox {\rm Aut}}
\def\g{\hbox{\sf  g}}
\def\<{\,<\!}
\def\>{\!>\,}
\def\Aut{\hbox {\rm Aut}}
\def\R{\mathbb{R}}
\def\g{\mathfrak{g}}
\def \Ad {{\rm{Ad}}}
\def \exp {{\rm exp}}
\def \dim {{\rm{dim \;}}}
\def \g {\mathfrak{g}}
\def \R {\mathbb{R}}
\begin{document}
	
	\title{On stable Cartan subgroups of Lie groups}
	
	\author{Parteek Kumar, Arunava Mandal, and Shashank Vikram Singh}

	\maketitle
	
	%
	

	\begin{abstract}
		Let $G$ be a connected real Lie group with associated Lie algebra $\mathfrak g$, and let ${\rm Aut}(G)$ be the group of (Lie) automorphisms of $G$. It is noted here that, given a super-solvable subgroup $\Gamma\subset {\rm Aut}(G)$ of semisimple automorphisms, there exists a $\Gamma$-stable Cartan subgroup, by using a result of Borel and Mostow. We characterize the $\Gamma$-stable  Cartan subgroups (with induced action) in the quotient group modulo a $\Gamma$-stable closed normal subgroup as the images of the $\Gamma$-stable Cartan subgroups in the ambient group. It is well known that a semisimple automorphism of $\mathfrak g$ always fixes a Cartan subalgebra of $\mathfrak g$. Conversely, if we take a representative from each non-conjugate class of Cartan subalgebras in a real Lie algebra, we show that there exists a non-identity automorphism that fixes these representatives. We explicitly identify such automorphisms in the case of classical simple Lie algebras. 
        As a consequence, we deduce an analogous result for semisimple Lie groups. Moreover, given a $\Gamma$-stable Cartan subgroup $H$ of $G$, and a $\Gamma$-stable closed connected normal subgroup $M$ of $G$, we prove that there exists a $\Gamma$-stable Cartan subgroup $H_M$ of $M$ such that $H\cap M\subset H_M$. 
        \end{abstract}

    \noindent {\it Keywords}: {Cartan subgroups, Automorphism invariant subgroups, admissible root system, classical Lie algebras.}

	
 \section{Introduction}
 Let $G$ be a real connected Lie group. In this article, we study the automorphisms of $G$ that fix certain closed subgroups of $G$. The automorphism group is itself a Lie group (not necessarily connected). The natural action of ${\rm Aut}(G)$ on the subgroups of $G$ plays an important role in the study of various topics, including geometry, dynamics, ergodic theory, and also contributes to solving polynomial equations (so called the surjectivity of the power map or word map problem) in the Lie groups.

 A seminal work by A. Borel and G. D. Mostow on semisimple automorphisms of Lie algebras facilitates the proof of various results on certain stable subalgebras of a Lie algebra under the group of automorphisms. It has a significant impact on a wide range of mathematical studies, such as the invariance of maximal tori, Cartan subgroups, and Borel subgroups, which are subgroups of interest to a large extent in the theory of algebraic groups. Additionally, it turns out to be a useful tool in determining some of the questions on the surjectivity of the power maps, as well as the density of their images, among other things. The above-mentioned theme and its related areas have attracted a lot of attention (see e.g. \cite{BhM}, \cite{BM}, \cite{BS}, \cite{Ch}, \cite{D}, \cite{KM}, \cite{Ma} \cite{St}, \cite{W}, \cite{Wu} ). The theme of fixed point sets and stable subgroups by automorphisms of a connected linear algebraic group was carried forward by numerous mathematicians, for example, J. P. Serre \cite{BS}, G. D. Mostow \cite{M}, J. H. Winger \cite{W}, R. Steinberg \cite{St}, P. Chatterjee \cite{Ch}, and many others. The majority of the results in this regard are of a purely algebraic nature.

 Let $\mathfrak g$ be the Lie algebra of a connected Lie group $G$. An automorphism $\psi: G\to G$ induces an automorphism $d\psi:\mathfrak g\to\mathfrak g$, and so it induces an injective map ${\rm Aut}(G) \to {\rm Aut}(\mathfrak g)\subset {\rm GL}(\mathfrak g)$ for a connected Lie group $G$. If $G$ is simply connected, then ${\rm Aut}(G)$ is isomorphic to ${\rm Aut}(\mathfrak g)$, but it is not true, in general. A subgroup $\Gamma$ of ${\rm Aut}(G)$ (or $\Gamma$ in ${\rm Aut}(\mathfrak g)$) is said to be super-solvable if there exists a finite sequence of subgroups $\Gamma=\Gamma_0\supset\Gamma_1\supset\cdots\supset\Gamma_r=\{e\}$
 such that each $\Gamma_i$ is normal in $\Gamma$, and each successive quotient $\Gamma_i/\Gamma_{i+1}$ is cyclic.

 Let $\Gamma$ be a subgroup of semisimple automorphisms of a Lie algebra $\mathfrak g$ over a field of characteristic zero. In \cite{BM}, A. Borel and G. D. Mostow showed that if $\mathfrak g$ and $\Gamma$ are solvable, then there exists a $\Gamma$-stable Cartan subalgebra. Furthermore, if $\Gamma$ is a super-solvable subgroup of ${\rm Aut}(\mathfrak g)$, consisting of semisimple automorphisms, then there exists a Cartan subalgebra of $\mathfrak g$ that is $\Gamma$-stable.
 In the case when $G$ is a connected solvable algebraic group over an algebraically closed field $\mathbb F$ with arbitrary characteristic, D. J. Winter showed that a finite subgroup $\Gamma\subset {\rm Aut}(G)$ with ${\rm Ord}(\Gamma)$ (cardinality of the finite group $\Gamma$) is co-prime to the characteristic of $\mathbb F$, $\Gamma$ leaves a Cartan subgroup of $G$ invariant (or stable) \cite{W}. In particular, if $\sigma$ is a semisimple algebraic automorphism of a connected algebraic group $G$, then $\sigma$ keeps stable a Cartan subgroup of $G$ (see \cite{W}). In \cite{St}, R. Steinberg has independently shown that $\sigma$ stabilizes a Borel subgroup of $G$. In \cite{Ch}, P. Chatterjee extends this result in an algebraic group setup. He proved that if the characteristic of $\mathbb F$ is zero, $G$ is an algebraic group defined over $\mathbb F$, and $\Gamma$ is a super-solvable subgroup of the semisimple $\mathbb F$-automorphisms of $G$, then there is a $\Gamma$-stable Cartan subgroup $H$ of $G$. Recall that an automorphism $\psi$ of an algebraic group $G$ defined over a field $\mathbb F$ is called semisimple if the derivative $d\psi$ is a semisimple $\mathbb F$-linear map on the Lie algebra of $G$ over $\mathbb F$ (\cite{Ch}).

 In this article, we investigate the above theme in the setup of the Lie group and study automorphism-invariant Cartan subgroups of a connected Lie group $G$. A subgroup $H$ in a Lie group $G$ is said to be a Cartan subgroup if it is a maximal nilpotent subgroup satisfying an additional property that every normal subgroup of finite index in $H$ is also of finite index in its own normalizer in $G$. We say, an automorphism $\psi$ of a connected Lie group $G$ is a semisimple automorphism if the derivative $d\psi$ is a semisimple linear map on its Lie algebra $\mathfrak{g}$. Using a result by Borel and Mostow, we extend Theorem 1.1 of \cite{Ch}.

 \begin{proposition}\label{Lie group}
   Let $G$ be a connected Lie group. Let $\Gamma\subset {\rm Aut}(G)$ be a super-solvable subgroup of semisimple automorphisms.
Then there exists a $\Gamma$-stable Cartan subgroup of $G$.  
\end{proposition}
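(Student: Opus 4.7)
The plan is to reduce the problem to the Lie algebra level via the derivative map and then invoke the Borel--Mostow theorem, which the excerpt explicitly cites. Since $G$ is connected, the map $d:\text{Aut}(G)\to \text{Aut}(\mathfrak g)$ is injective, so $\Gamma$ may be identified with its image $d(\Gamma)\subset \text{Aut}(\mathfrak g)$. The super-solvability filtration of $\Gamma$ transports under $d$ to a filtration of $d(\Gamma)$ by normal subgroups with cyclic successive quotients, so $d(\Gamma)$ is super-solvable in $\text{Aut}(\mathfrak g)$. By hypothesis each $\psi\in\Gamma$ is a semisimple automorphism of $G$, meaning $d\psi$ is a semisimple linear map on $\mathfrak g$, so every element of $d(\Gamma)$ is a semisimple automorphism of $\mathfrak g$ in the sense of \cite{BM}.

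With these reductions in place I would invoke the Borel--Mostow theorem directly: since $\mathfrak g$ is a Lie algebra over the characteristic zero field $\mathbb R$ and $d(\Gamma)$ is a super-solvable group of semisimple automorphisms of $\mathfrak g$, there exists a Cartan subalgebra $\mathfrak h\subset\mathfrak g$ that is stable under $d(\Gamma)$; equivalently, $d\psi(\mathfrak h)=\mathfrak h$ for every $\psi\in\Gamma$.

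The remaining step is to lift $\mathfrak h$ to a Cartan subgroup. For a connected Lie group $G$ there is a canonical Cartan subgroup $H\subset G$ whose Lie algebra is a given Cartan subalgebra $\mathfrak h$; concretely one can take $H$ to be the normalizer in $G$ of $\mathfrak h$ (so that $\mathrm{Lie}(H)=\mathfrak n_{\mathfrak g}(\mathfrak h)=\mathfrak h$ by self-normalization of Cartan subalgebras) or, equivalently, the normalizer of the analytic subgroup generated by $\exp\mathfrak h$. Because this construction depends only on $\mathfrak h$ and the ambient structure, it is functorial under $\text{Aut}(G)$: for any $\psi\in\Gamma$ one has $\psi(H)=N_G(d\psi(\mathfrak h))=N_G(\mathfrak h)=H$. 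Thus $H$ is the desired $\Gamma$-stable Cartan subgroup.

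The main obstacle is the final lifting step. The definition of Cartan subgroup adopted in the paper (maximal nilpotent with the extra finite-index normalizer condition) does not immediately guarantee a bijective, automorphism-equivariant correspondence with Cartan subalgebras in an arbitrary connected Lie group; one must verify that the canonical construction actually yields a subgroup satisfying this definition. This is a classical structural fact about Cartan subgroups of connected real Lie groups, but it requires a careful citation or short direct argument; once granted, the $\Gamma$-invariance follows purely formally from $d\psi(\mathfrak h)=\mathfrak h$.
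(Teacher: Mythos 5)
Your overall route is exactly the paper's: push $\Gamma$ forward to $d(\Gamma)\subset{\rm Aut}(\mathfrak g)$, apply Borel--Mostow over $\mathbb R$ to get a stable Cartan subalgebra $\mathfrak h$, and lift to a Cartan subgroup. However, the concrete lifting you propose is wrong: the Cartan subgroup with Lie algebra $\mathfrak h$ is \emph{not} $N_G(\mathfrak h)$, nor the normalizer of the analytic subgroup $\exp\mathfrak h$. Already in $G=\SL(2,\mathbb R)$ with $\mathfrak h$ the diagonal traceless matrices, $N_G(\mathfrak h)$ contains the Weyl element $w=\begin{pmatrix}0&1\\-1&0\end{pmatrix}$, whose conjugation inverts the diagonal one-parameter subgroup; hence $N_G(\mathfrak h)$ (and likewise $N_G(\exp\mathfrak h)$) is not nilpotent and is not a Cartan subgroup, while the actual Cartan subgroup is the full diagonal subgroup. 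So the identity $\psi(H)=N_G(d\psi(\mathfrak h))=N_G(\mathfrak h)=H$ is not an argument about the right object. (Having $\mathrm{Lie}(N_G(\mathfrak h))=\mathfrak h$ does not make $N_G(\mathfrak h)$ the Cartan subgroup; the extra components ruin maximal nilpotency.)

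The repair is precisely the ingredient you flagged as the "main obstacle", and it is what the paper uses: Chevalley's one-to-one correspondence for connected Lie groups, i.e.\ for each Cartan subalgebra $\mathfrak h$ there is a \emph{unique} Cartan subgroup $H$ of $G$ with $L(H)=\mathfrak h$ (concretely $H=\{g\in G:\Ad(g)X=X\ \text{for all}\ X\in\mathfrak h\}\exp(\mathfrak h)$, cf.\ Lemma 3.9 of \cite{MS21}). Then for $\psi\in\Gamma$, $\psi(H)$ is again a Cartan subgroup and $L(\psi(H))=d\psi(\mathfrak h)=\mathfrak h$, so uniqueness gives $\psi(H)=H$; no explicit normalizer description is needed. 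With that substitution your proof coincides with the paper's.
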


 For a connected Lie group $G$, there is a (unique) largest connected solvable normal subgroup $R$ of $G$, called the radical of $G$. Then any connected Lie group can be written as $G=SR$, where $S$ is a Levi subgroup, which is a maximal connected semisimple subgroup of $G$ unless it is
trivial. Recently, in \cite{MS21} (and \cite{MS23}), it is proved that, for a closed connected normal subgroup $M$ of $G$, if $H$ is a Cartan subgroup of $G$, then $HM/M$ is a Cartan subgroup of $G/M$. Conversely, if $Q$ is a Cartan subgroup of $G/M$, then there exists a Cartan subgroup $H$ of $G$ such that $HM/M=Q$. 
 Note that radical $R$ is stable under any automorphism of $G$, and an automorphism $\psi:G\to G$ induces an automorphism $\hat\psi: G/R\to G/R$ by $\hat\psi(gR):=\psi(g)R$. For a subgroup $\Gamma\subset {\rm Aut}(G)$, we denote $\hat\Gamma=\{\hat\psi\;|\;\psi\in\Gamma\}$, the induced automorphism on $G/R$.
Let $\Gamma\subset {\rm Aut}(G)$ be a super-solvable subgroup of semisimple automorphisms of $G$. In Proposition \ref{quotient}, we prove that if $Q$ is $\hat\Gamma$-stable Cartan subgroup of $G/R$, there exists a $\Gamma$-stable Cartan subgroup $H$ of $G$ such that $HR/R=Q.$ The result ensures that to get an automorphism stable Cartan subgroup of a (real) connected Lie group $G$, it is enough to look for (the induced) automorphism stable Cartan subgroups of the semisimple group $G/R$. Furthermore, we extend this result for any closed $\Gamma$-stable normal subgroup $M$ of $G$ (instead of the radical $R$ of $G$), by using Proposition \ref{quotient}. 

\begin{theorem}\label{quotient modulo M}
    Let $G$ be a connected Lie group, and let $\Gamma\subset {\rm Aut}(G)$ be a super-solvable subgroup of semisimple automorphisms. Let $M$ be a closed normal subgroup of $G$ stable under $\Gamma$.
     Let $\hat\Gamma$ be the group of automorphisms on $G/M$ induced from $\Gamma$. Let $Q$ be a $\hat \Gamma$-stable Cartan subgroup of $G/M$, then there exists a $\Gamma$-stable Cartan subgroup $H$ of $G$ such that $HM/M=Q.$
\end{theorem}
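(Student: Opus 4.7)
The plan is to reduce Theorem \ref{quotient modulo M} to Proposition \ref{quotient} (the case $M=R$, the radical) by factoring through an intermediate semisimple quotient. Let $R$ denote the radical of $G$ and set $L:=MR$. Since $M$ and $R$ are both closed, normal, and $\Gamma$-stable, and since $R$ is normal, the subgroup $L$ is closed, normal, and $\Gamma$-stable; moreover, $G/L$ is semisimple (being a quotient of $G/R$), and $L/M$ is precisely the radical of $G/M$. Both canonical projections $G\to G/R$ and $G\to G/M$ therefore factor through the common semisimple quotient $G\to G/L$.

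The strategy proceeds in three steps. First, by the Mandal--Singh correspondence \cite{MS21,MS23}, the image $\bar{Q}$ of $Q$ under $G/M\to G/L$ is a Cartan subgroup of the semisimple group $G/L$, and it is stable under the induced super-solvable group $\tilde{\Gamma}$ of semisimple automorphisms of $G/L$. Second, I would lift $\bar{Q}$ to a $\hat{\Gamma}$-stable Cartan subgroup $Q_{R}$ of $G/R$ whose image in $G/L$ is $\bar{Q}$. This is the ``semisimple instance'' of the theorem we are proving, and I plan to establish it directly using Proposition \ref{Lie group} applied to $G/R$ together with the structure theory of Cartan subgroups of semisimple Lie groups and the conjugacy of Cartans within a fixed class, adjusting conjugation via the Borel--Mostow theorem to preserve $\hat{\Gamma}$-stability. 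Third, Proposition \ref{quotient} applied to the pair $(G,R)$ provides a $\Gamma$-stable Cartan subgroup $H$ of $G$ with $HR/R = Q_{R}$, so that $HL/L = \bar{Q}$ by construction.

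The main obstacle is to upgrade the identity $HL/L=\bar{Q}$ to the sharper equality $HM/M=Q$. The Cartan subgroup $H$ produced above is pinned down by its image $Q_{R}$ in $G/R$, but its image in $G/M$ lies in the fibre over $\bar{Q}$ of the projection $G/M\to G/L$, which is contained in the solvable group $L/M$, and need not equal $Q$ without further intervention. To force agreement with $Q$, I plan a coordinated selection: refine the choice of $Q_{R}$ in the second step so that, after applying Proposition \ref{quotient}, the resulting $H$ satisfies $HM/M=Q$ and not merely $HL/L=\bar{Q}$. The existence of such a compatible lift should follow from an additional application of the Borel--Mostow theorem to the induced super-solvable action on the solvable extension relating the two projections over $G/L$. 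Balancing $\Gamma$-stability with the precise $G/M$-image across the two commuting projections $G\to G/R$ and $G\to G/M$ is the technical crux of the argument and where I expect the chief difficulty to lie.
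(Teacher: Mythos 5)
There is a genuine gap, and it is precisely the one you flag yourself as the ``technical crux'': nothing in your construction forces $HM/M=Q$ rather than merely $HL/L=\bar{Q}$. The route $G\to G/R\to G/L$ only ever constrains $HR/R$ and hence $HL/L$. But the Cartan subgroups of $G/M$ lying over $\bar{Q}$ under $G/M\to G/L$ form a whole conjugacy class (they are conjugate by elements of the solvable radical $L/M$, indeed of its nilradical, by the results of \cite{MS21}), and $Q$ is just one member of it; Proposition \ref{quotient} gives you no handle on which member $HM/M$ turns out to be. Your proposed remedy --- ``refine the choice of $Q_R$'' and invoke Borel--Mostow once more on ``the solvable extension relating the two projections'' --- is not an argument: what is needed is to produce, among the Cartan subgroups of $G$ whose image modulo $M$ is exactly $Q$, one that is $\Gamma$-stable, and that is the theorem itself. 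The reason Proposition \ref{quotient} achieves exact equality of images is that its correcting conjugation is by an element of the nilradical of $H^0R$, which lies in the kernel $R$ of the projection; in your scheme the corrections are not confined to $M$, so the image modulo $M$ drifts. (Your second step --- lifting $\bar{Q}$ from $G/L$ to a $\hat\Gamma$-stable $Q_R\le G/R$ with prescribed image --- is also only gestured at; conjugating to gain stability can again destroy the prescribed image unless you use the product structure of $G/R$ along its semisimple normal subgroup $L/R$.)

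The paper avoids this by factoring through the radical of $M$, not the radical of $G$: it passes through $G\to G/M_R\to (G/M_R)/(M/M_R)$ with $M_R$ the radical of $M$, so that every kernel lies inside $M$ and every correction happens inside $M$. The semisimple stage (normal semisimple kernel) is handled by noting $Z(M)\subset Z(G)$, reducing to center-free $M$, and invoking Iwasawa's splitting $G=M\times Z_G(M)$; one then takes $H=H_1\times Q$ with $H_1$ a $\Gamma$-stable Cartan subgroup of $M$ from Proposition \ref{Lie group}, which gives $HM/M=Q$ on the nose. The solvable stage is Proposition \ref{quotient}'s argument, where the conjugating element lies in the nilradical of $H^0R$ and hence does not move the image modulo the kernel. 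If you want to salvage your outline, you would have to reorganize it along these lines, i.e.\ make both reduction steps have kernels contained in $M$ and carry an exact-image statement at each stage; as written, the step from $HL/L=\bar{Q}$ to $HM/M=Q$ is missing.
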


Let $K_0R/R$ be a maximal compact connected normal subgroup of $G/R$. Observe that the subgroup $K_0R$ of $G$, which is amenable, is characteristic in $G$, and hence it is automorphism invariant. Therefore, by a direct application of Theorem \ref{quotient modulo M}, it suffices to examine the automorphism-invariant Cartan subgroups of the non-compact semisimple Lie group $G$. To begin, we focus on semisimple Lie algebras and derive the corresponding results for semisimple Lie groups.

The group $G$ acts on its Lie algebra $\mathfrak g$ via the adjoint action. Suppose $g\in G$ is such that ${\rm Ad}(g):\mathfrak g\to\mathfrak g$ is semisimple. Then, in this case, it is well-known that ${\rm Ad}(g)$ fixes a Cartan subalgebra of $\mathfrak g.$ Here, we explore the converse of this statement. Note that there are finitely many conjugacy classes of Cartan subalgebras of a real semisimple Lie algebra $\mathfrak g.$ Let $\mathfrak h_1,\ldots,\mathfrak h_n$ be the representatives of each distinct conjugacy class of Cartan subalgebras. 
We ask the following question.

\begin{question}
    Let $\mathfrak g$ be a real semisimple Lie algebra. Does there exist a non-identity automorphism $\sigma\in\Aut(\mathfrak g)$ such that $\sigma(\mathfrak h_i)=\mathfrak h_i$ for all $i=1,\ldots,n$ ?
\end{question}

We give an affirmative answer to the above question that follows from Theorem \ref{s.s liealgebra}. To state Theorem \ref{s.s liealgebra}, we require some preliminaries. Let $\g$ be a real semisimple Lie algebra, and $\sigma$ be the Cartan involution of $\g$. Then 
    $\g=\mathfrak{k}+ \mathfrak{p},$ where $\mathfrak{k}=\{X\in \g: \sigma X=X\}$ and $\mathfrak{p}=\{X\in \g: \sigma X=-X\}$ satisfying $[\mathfrak{k},\mathfrak{k}]\subset\mathfrak{k}$, $[\mathfrak{k},\mathfrak{p}]\subset \mathfrak{p}$ and $[\mathfrak{p},\mathfrak{p}] \subset \mathfrak{k}$. Any two Cartan decompositions are conjugate to each other under the action of $\mathrm{Ad}(G)$, where $\mathrm{Ad}(G)$ denotes the adjoint group of $\g$. Let $K$ be the analytic subgroup of $ G$ with Lie algebra $\mathfrak{k}$ and $\mathfrak{m}$ be a maximal abelian subalgebra in $\mathfrak{p}$.  A triple $(\mathfrak{k},\mathfrak{p},\mathfrak{m})$ is said to be a standard triple if $\g=\mathfrak{k}+\mathfrak{p}$ is a Cartan decomposition and $\mathfrak{m}$ is a maximal abelian subalgebra in $\mathfrak{p}$. With respect to the standard triple $(\mathfrak{k},\mathfrak{p},\mathfrak{m})$, where $\mathfrak{m}^-=\mathfrak{m}$ (and $\mathfrak{m}^-=\{X\in \mathfrak{m}$: all eigenvalues of $ad X$ are purely imaginary$\}$), there is a one to one correspondence between conjugate classes of Cartan subalgebras under $K$ of a real semisimple Lie algebra $\mathfrak g$ and the conjugate classes of admissible root systems contained in $\mathbf{R}(\mathfrak m)$ (Root system associated to the Cartan subalgebra $\mathfrak{m}$) under $W(\bf R)$ (see Theorem 6 of \cite{Su}), the detailed notations and terminologies for Theorem \ref{s.s liealgebra} are given in \S 2 (or see \cite{Su}).

\begin{theorem}\label{s.s liealgebra}
    Let $(\mathfrak{k},\mathfrak{p},\mathfrak{m})$ be a standard triple of a real simple Lie algebra $\mathfrak{g}$, where $\mathfrak{m}^-=\mathfrak{m}$. 
Let $G$ be the connected Lie group associated to $\mathfrak g$, and
let $K$ be the analytic subgroup of $ G$ with Lie algebra $\mathfrak{k}$. 
Then the following hold:
\begin{enumerate}
\item[{$(1)$}] If $s\in W(\bf R)$ fixes all admissible root systems, then there exists $k\in K$ and a representative $\mathfrak h_1,\cdots,\mathfrak h_m$ of each distinct conjugacy classes of the Cartan subalgebras such that $k\cdot \mathfrak h_i=\mathfrak h_i$ for all $i=1,\ldots,m$.

\item[{$(2)$}] If $g\in G$ fixes a representative of each distinct conjugacy classes of the Cartan subalgebras $\mathfrak h'_i$,  then there exists an element $s \in W(\mathbf{R}$) that fixes a representative of all admissible root systems $F_1, F_2, \ldots, F_m$ for all $i$.
\end{enumerate}
\end{theorem}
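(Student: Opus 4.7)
The plan is to build on Sugiura's Theorem 6 of \cite{Su}, which provides a bijection between $K$-conjugacy classes of Cartan subalgebras of $\mathfrak{g}$ and $W(\mathbf{R})$-conjugacy classes of admissible root systems in $\mathbf{R}(\mathfrak{m})$, together with the classical identification $W(\mathbf{R}) \cong N_K(\mathfrak{m})/Z_K(\mathfrak{m})$. Concretely, to each admissible root system $F_i = \{\beta_1,\ldots,\beta_{r_i}\}$ (a strongly orthogonal subset of roots) Sugiura associates a Cartan subalgebra $\mathfrak{h}_{F_i}$ obtained from a reference Cartan subalgebra built out of $\mathfrak{m}$ by Cayley transforms along the $\beta_j$'s. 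I will use these $\mathfrak{h}_{F_i}$ as the standard representatives of the $K$-conjugacy classes throughout.

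For part (1), I would lift $s \in W(\mathbf{R})$ to some $k \in N_K(\mathfrak{m}) \subset K$, so that $\mathrm{Ad}(k)|_{\mathfrak{m}} = s$. Since $\mathrm{Ad}(k)$ permutes the root spaces via $\mathfrak{g}_\alpha \mapsto \mathfrak{g}_{s\alpha}$, and $s(F_i) = F_i$ by hypothesis, the Cayley-transformed vectors used to construct $\mathfrak{h}_{F_i}$ are permuted among themselves by $\mathrm{Ad}(k)$. Absorbing any residual scalar or sign ambiguity on the root vectors by multiplying $k$ with a suitable element of $Z_K(\mathfrak{m})$ (which does not alter its image in $W(\mathbf{R})$), one obtains $\mathrm{Ad}(k)\mathfrak{h}_{F_i} = \mathfrak{h}_{F_i}$ simultaneously for all $i$.

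For part (2), given $g \in G$ preserving representatives $\mathfrak{h}'_i$, Sugiura's bijection provides $k_i \in K$ with $\mathrm{Ad}(k_i)\mathfrak{h}'_i = \mathfrak{h}_{F_i}$. In particular, taking the representative corresponding to the empty admissible root system, one obtains $k_0 \in K$ such that $g_0 := k_0 g k_0^{-1}$ normalizes the reference Cartan subalgebra, thereby inducing a Weyl-group element $s \in W(\mathbf{R})$. The decisive step is to verify that this single $s$ fixes each $F_i$ for an appropriate choice of representatives: I would compare the action of $\mathrm{Ad}(g)$ on each $\mathbf{R}(\mathfrak{h}'_i)$ with that of $\mathrm{Ad}(g_0)$ on $\mathbf{R}(\mathfrak{m})$ through the conjugations $\mathrm{Ad}(k_i k_0^{-1})$ and the Cayley transforms linking the two root systems; since both actions arise from the single automorphism $\mathrm{Ad}(g)$ of $\mathfrak{g}$, the induced permutations of roots must be compatible, yielding $s(F_i) = F_i$.

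The main obstacle I anticipate lies in part (2): coherently aligning the Weyl-group elements extracted from different Cartan subalgebras into one global $s \in W(\mathbf{R})$. Tracking the action through the Cayley transforms is largely bookkeeping, but handling the residual freedom in the choice of the lifts $k_i$ may require either a careful argument involving $Z_K(\mathfrak{m})$, or, as already employed elsewhere in the paper, an appeal to the classification of classical simple Lie algebras to verify the coherence directly in each type.
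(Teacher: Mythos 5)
Your part (1) is essentially the paper's own route: lift $s$ to $k\in N_K(\mathfrak m)$ and use Sugiura's explicit construction of the representative Cartan subalgebras, whose toroidal part is spanned by the vectors $x_{\alpha}+x_{-\alpha}$, $\alpha\in F_i$, with vector part $\mathfrak n_i=\mathfrak l_i^{\perp}\cap\mathfrak m$. One small correction: since $\Ad(k)$ permutes the root spaces and commutes with the Cartan involution, it carries $(E_{\alpha}\oplus E_{-\alpha})\cap\mathfrak k$ onto $(E_{s\alpha}\oplus E_{-s\alpha})\cap\mathfrak k$, so the invariance of the toroidal part comes for free from the construction; no adjustment of $k$ by an element of $Z_K(\mathfrak m)$ is needed, and it is in any case unclear that a single such adjustment could serve all the $\mathfrak h_i$ simultaneously. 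The paper's actual work in (1) is the dimension count showing that the span of the $x_\alpha+x_{-\alpha}$ is the \emph{entire} toroidal part of the representative, which your sketch takes for granted but which is the point of the construction.

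The genuine gap is in part (2), and it is exactly the step you yourself flag as the obstacle. First, your target is miscalibrated: in this paper ``$s$ fixes the admissible root system $F_i$'' (Remark \ref{preserverAdmissibleRootSystem}) means only that the subspace $\mathfrak l_i=\sum_{\alpha\in F_i}\mathbb R H_{\alpha}$ is preserved, not that $s(F_i)=F_i$ as a set of positive roots; the motivating example is the longest Weyl element, which sends $F_i$ to $-F_i$, so the literal set equality you aim for is stronger than required and false in general. Second, and more importantly, your scheme extracts data from $g$ through separate conjugations $k_i$ (one per class) and must then cohere them into one $s\in W(\mathbf R)$; the transported permutations differ by the uncontrolled elements $k_i g k_i^{-1} g_0^{-1}$, and nothing in the proposal pins these down — ``the induced permutations must be compatible'' is an assertion, not an argument. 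The paper avoids the alignment problem altogether: by Sugiura's Theorem 3 one writes $g=kp$ with $p$ fixing each vector part $(\mathfrak h_i')^-$ pointwise; then $\Ad(g)\mathfrak h_i'=\mathfrak h_i'$ forces the \emph{single} compact factor $k$ to preserve every $(\mathfrak h_i')^-$, in particular $\mathfrak m$ itself, so $k\in N_K(\mathfrak m)$ and its class in $W(\mathbf R)$ preserves every $\mathfrak l_i=((\mathfrak h_i')^-)^{\perp}\cap\mathfrak m$, which is precisely the conclusion in the sense of Remark \ref{preserverAdmissibleRootSystem}. Without this decomposition (or some substitute producing one element of $K$ preserving all the vector parts at once), your part (2) does not go through, and retreating to a type-by-type verification over the classical algebras would be a different and much heavier argument than the one you sketch.
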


Note that there exists a unique longest Weyl group element that sends every positive root to its negative one. It is straightforward to see that this element fixes all the admissible root systems. Then part (1) of Theorem \ref{s.s liealgebra} ensures that there exists an element $k \in K$ that fixes all the Cartan subalgebras $\mathfrak{h_i}$. 
In addition, we explicitly identify such $k \in K$ for the classical simple Lie algebras $A_n$, $B_n$, $C_n$, and $D_n$ types (see \S 4). In some cases, for e.g. $\mathfrak g=\mathfrak {sl}(n,\mathbb R)$, $\mathfrak o(2n,\mathbb R)$ with $n\leq 5$, we show that there are many ($\geq 2$) non-identity automorphisms of the Lie algebra $\mathfrak{g}$ that fix all Cartan subalgebras $\mathfrak h_1,\cdots,\mathfrak h_n$, whereas in some cases (for e.g., $\mathfrak {sp}(n,\mathbb R)$, $\mathfrak o(2n,\mathbb R)$ with $n\geq 6$) we prove that there exists a unique (non-identity) element in $K$ that does the job. Also for $\mathfrak o(2n+1,\mathbb R)$ with $n$ odd, we show that there are exactly three (non-identity) elements in $K$ that fix all $\mathfrak h_i$ (see \S 4 for details). Furthermore, we deduce the following result.

\begin{corollary}
    Let $G$ be a connected  semisimple Lie group with Lie algebra $\mathfrak{g}$ and let $H_1,\ldots, H_m$ be Cartan subgroups of $G$ such that $Lie(H_i)=\mathfrak{h_i}$, where $\mathfrak{h_i}$ are as described in Theorem \ref{s.s liealgebra}. Then there exists $k\in K$ such that $k$ fixes each $H_i$, that is, $kH_ik^{-1}=H_i$ for all $i$.
\end{corollary}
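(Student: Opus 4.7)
The plan is to apply Theorem \ref{s.s liealgebra}(1) at the Lie algebra level to produce the element $k\in K$, and then lift the infinitesimal statement $\Ad(k)\mathfrak{h}_i=\mathfrak{h}_i$ to the Lie group statement $kH_ik^{-1}=H_i$, using the fact that in a connected semisimple Lie group a Cartan subgroup is completely determined by its Lie algebra.

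The first step is essentially the remark made just after Theorem \ref{s.s liealgebra}: the longest element $w_0\in W(\mathbf R)$ reverses the sign of every positive root and hence fixes every admissible root system setwise, so Theorem \ref{s.s liealgebra}(1) supplies $k\in K$ with $\Ad(k)\mathfrak{h}_i=\mathfrak{h}_i$ for each $i$. Since the corollary concerns a semisimple rather than a simple Lie algebra, I would first decompose $\g=\g_1\oplus\cdots\oplus\g_r$ into simple ideals, note that conjugacy classes of Cartan subalgebras of $\g$ factor as products of conjugacy classes in the $\g_j$, apply the simple case componentwise, and take $k$ to be the product of the factor-wise elements.

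To promote this to the group level, observe that conjugation by $k$ is an automorphism of $G$, and the defining properties of a Cartan subgroup (maximal nilpotent, with every normal subgroup of finite index being of finite index in its own normalizer) are automorphism-invariant. Hence $kH_ik^{-1}$ is itself a Cartan subgroup of $G$ with Lie algebra $\Ad(k)\mathfrak{h}_i=\mathfrak{h}_i$, the same as that of $H_i$. In the connected semisimple setting, the Cartan subgroup attached to a Cartan subalgebra $\mathfrak{h}$ is uniquely the centralizer $Z_G(\mathfrak{h})$: its identity component is the analytic subgroup $\exp(\mathfrak{h})$ (because $\mathfrak{h}$ is self-centralizing in $\g$), and the component group is governed entirely by $\mathfrak{h}$. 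Consequently
\[
kH_ik^{-1}=Z_G(\Ad(k)\mathfrak{h}_i)=Z_G(\mathfrak{h}_i)=H_i,
\]
as required.

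The only delicate point is this identification of the abstractly defined Cartan subgroup $H_i$ with $Z_G(\mathfrak{h}_i)$ in the connected semisimple case; this is a standard fact, but should be cited or quickly verified for self-containedness, after which the corollary is a routine lift of Theorem \ref{s.s liealgebra}(1) from Lie algebras to Lie groups.
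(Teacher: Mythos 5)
Your proposal is correct and follows essentially the same route as the paper: extract $k\in K$ from Theorem \ref{s.s liealgebra}(1) (via the longest Weyl group element, as in the remark following the theorem), and then lift to the group level using the fact that a Cartan subgroup of a connected Lie group is uniquely determined by its Cartan subalgebra — exactly the Chevalley one-to-one correspondence the paper invokes in the proof of Proposition \ref{Lie group}. Your identification $H_i=Z_G(\mathfrak{h}_i)$ and the reduction of the semisimple case to simple ideals are just explicit ways of packaging that same uniqueness step, so no genuinely different argument is involved.
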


Let $G$ be a connected Lie group with radical $R$ and nilradical $N$ (the largest connected nilpotent normal subgroup of $G$). Let $H$ be a Cartan subgroup of $G$. Then $H\cap R$ is not necessarily a Cartan subgroup of $R$. However, $H\cap R$ is contained in a Cartan subgroup $H_R$ of $R$. The same statement holds when $M$ is any connected, normal subgroup of $G$ not just the radical $R$ (\cite{MS21}). We now extend this result as follows:

 \begin{theorem}\label{intersection with sub}
 Let $G$ be a connected Lie group and let $\Gamma$ be a subgroup of {\rm Aut}(G).
 Let $M$ be a $\Gamma$-stable closed connected normal subgroup, and let $H$ be a $\Gamma$-stable Cartan subgroup of $G$. Then there exists a $\Gamma$-stable Cartan subgroup $H_M$ of $M$ such that $H\cap M\subset H_M$
 \end{theorem}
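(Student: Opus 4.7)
My plan is to extract from the construction of \cite{MS21,MS23} a presentation of $H_M$ that depends only on the pair $(H,M)$, so that the $\Gamma$-stability of $H$ and $M$ automatically forces $H_M$ to be $\Gamma$-stable.

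Working at the Lie-algebra level, write $\mathfrak{g}$, $\mathfrak{m}$, $\mathfrak{h}$ for the Lie algebras of $G$, $M$, $H$ respectively. Since $\mathfrak{h}$ is a Cartan subalgebra of $\mathfrak{g}$ and $\mathfrak{m}$ is an ideal, $\mathfrak{m}$ is stable under $\mathrm{ad}\,\mathfrak{h}$. I would first produce a Cartan subalgebra $\mathfrak{h}_M$ of $\mathfrak{m}$ satisfying
\[
\mathfrak{h}\cap\mathfrak{m}\;\subseteq\;\mathfrak{h}_M,\qquad [\mathfrak{h},\mathfrak{h}_M]\;\subseteq\;\mathfrak{h}_M,
\]
using the construction underlying \cite{MS21}; the existence of such $\mathfrak{h}_M$ is the Lie-algebra content of their theorem. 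The heart of the argument is to verify that $\mathfrak{h}_M$ is intrinsically determined by $(\mathfrak{h},\mathfrak{m})$, independent of any auxiliary choices.

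Granting this canonicity, let $H_M^{\circ}$ denote the analytic subgroup of $M$ with Lie algebra $\mathfrak{h}_M$, and set $H_M:=(H\cap M)\cdot H_M^{\circ}$. Elements of $H\cap M$ normalize $\mathfrak{h}$ (since they lie in the Cartan subgroup $H$), and hence, by canonicity, they normalize $\mathfrak{h}_M$; thus $H\cap M$ normalizes $H_M^{\circ}$ and $H_M$ is a subgroup. The (non-equivariant) results of \cite{MS21,MS23} identify $H_M$ as a Cartan subgroup of $M$, and it contains $H\cap M$ by construction. For any $\gamma\in\Gamma$, the differential $d\gamma$ preserves both $\mathfrak{h}$ (because $\gamma(H^{\circ})=H^{\circ}$) and $\mathfrak{m}$ (because $\gamma(M)=M$), so by canonicity it preserves $\mathfrak{h}_M$; at the group level this gives $\gamma(H_M^{\circ})=H_M^{\circ}$, and combined with $\gamma(H\cap M)=H\cap M$ we conclude $\gamma(H_M)=H_M$.

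The main obstacle I foresee is establishing that $\mathfrak{h}_M$ is genuinely choice-free. The cleanest route is an outright uniqueness statement: any Cartan subalgebra of $\mathfrak{m}$ containing $\mathfrak{h}\cap\mathfrak{m}$ and normalized by $\mathfrak{h}$ must in fact coincide, by a conjugacy argument inside the $\mathrm{ad}\,\mathfrak{h}$-stable subalgebra $N_{\mathfrak{m}}(\mathfrak{h}\cap\mathfrak{m})$. Should uniqueness fail in some boundary situation, one would instead single out $\mathfrak{h}_M$ by a finer prescription — for instance the generalized zero weight space of $\mathrm{ad}\,\mathfrak{h}$ on a suitable $\mathfrak{h}$-invariant reductive subquotient of $\mathfrak{m}$ — and verify that this prescription is manifestly natural in $(\mathfrak{h},\mathfrak{m})$. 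Either way, once the canonicity is in place the $\Gamma$-equivariance step adds no further analytic content.
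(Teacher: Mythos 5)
Your proposal has a genuine gap, and it sits exactly at the point the whole theorem turns on: the ``canonicity'' of a Cartan subalgebra $\mathfrak h_M\subseteq\mathfrak m$ with $\mathfrak h\cap\mathfrak m\subseteq\mathfrak h_M$ and $[\mathfrak h,\mathfrak h_M]\subseteq\mathfrak h_M$ is asserted as the engine of the proof but never established -- you yourself flag it as the main obstacle and offer only a conditional fallback. Note first that \cite{MS21} does not hand you even the existence of an $\mathfrak h$-stable $\mathfrak h_M$ containing $\mathfrak h\cap\mathfrak m$: what it gives is a Cartan subgroup of $M$ containing $H\cap M$, built through a choice of Levi subgroup and a choice of Cartan subgroup $H_J$ of $(S\cap M)^0$, and, as the remark following Theorem \ref{intersection with sub} in the paper stresses, these choices are precisely what fail to be $\Gamma$-invariant. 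So both existence-with-normalization and uniqueness are new statements you would have to prove. Your fallback prescription is essentially empty as stated: since $\mathfrak g^0(\ad\,\mathfrak h)=\mathfrak h$ for a Cartan subalgebra, the generalized zero weight space of $\ad\,\mathfrak h$ on $\mathfrak m$ is exactly $\mathfrak h\cap\mathfrak m$, which in general is \emph{not} a Cartan subalgebra of $\mathfrak m$ -- that is the whole difficulty -- so all the content is hidden in the unspecified ``suitable reductive subquotient''. (The most one gets cheaply is the Fitting decomposition $\mathfrak h_M=(\mathfrak h\cap\mathfrak m)\oplus\mathfrak h_M^1$ under $\ad\,\mathfrak h$, which does not pin $\mathfrak h_M$ down.)

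Even granting canonicity, the group-level step is incomplete. The Cartan subgroup of $M$ with Lie algebra $\mathfrak h_M$ is $Z_M(H_M^{0})H_M^{0}$ (Chevalley correspondence, Lemma 3.9 of \cite{MS21}), and to place $H\cap M$ inside it you need much more than ``$H\cap M$ normalizes $\mathfrak h_M$'': elements of $H$ centralize $\mathfrak h$, hence $\mathfrak h\cap\mathfrak m$, but not obviously the larger algebra $\mathfrak h_M$, and an element normalizing $\mathfrak h_M$ through a non-unipotent automorphism would make $(H\cap M)\cdot H_M^{0}$ fail to be nilpotent (compare the Weyl element of $SL(2,\R)$, which normalizes the diagonal Cartan subalgebra but lies in no Cartan subgroup containing it). This containment is precisely what Steps 1--4 of Case 4 in the paper's proof labor to establish. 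The paper avoids your uniqueness question altogether by a constructive, manifestly $\Gamma$-equivariant device: in the solvable case the normalizer tower $H\cap R\subset N_R(H\cap R)\subset N_R(N_R(H\cap R))\subset\cdots$, each term canonical and hence $\Gamma$-stable, terminates in a Cartan subgroup; in general it shows that $H_J^0(H\cap R)$ and $H_J^0R$ are $\Gamma$-stable even though $H_J$ itself need not be, extracts a $\Gamma$-stable Cartan subgroup $H'$ of the solvable group $H_J^0R$ containing $H_J^0(H\cap R)$, and then proves that $H_M:=Z_M(H')H'$ is a Cartan subgroup of $M$ containing $H\cap M$. To turn your outline into a proof you would have to supply both the canonicity statement and the missing containment argument; as it stands, the $\Gamma$-equivariance is the only part that is actually routine.
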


\begin{remark}
   It is known that if $H$ is a Cartan subgroup in $G$, then there is a semisimple Lie subgroup $S$ and the radical $R$ such that $G=SR$, and $H=(H\cap S)(H\cap R)$. Suppose that $M$ is a $\Gamma$-stable closed connected normal subgroup containing $R$. Take $J=(S\cap M)^0$, and hence $M=JR$. Results from \cite{MS21} show that every Cartan subgroup of $M$ is of the form $H_JH_{Z_R(H_J)}$, where $H_J$ is a Cartan subgroup of $J$ and $H_{Z_R(H_J)}$ is a Cartan subgroup of $Z_R(H_J)$. However, this description is inadequate for our needs because $H_J$ is not necessarily invariant under $\Gamma$. To prove Theorem \ref{intersection with sub}, we therefore need a Cartan subgroup of $M$ that is $\Gamma$-stable and also contains $H\cap M$. We achieve this by considering the subgroup $Z_M(H')H'$, where $H'$ is a $\Gamma$-stable Cartan subgroup of $H_J^0R.$ 
   \end{remark}
 
The paper is organized as follows. In \S 2, we recall some definitions and useful facts. In \S 3, we prove Proposition \ref{Lie group} and Proposition \ref{quotient}. Theorem \ref{s.s liealgebra} is taken up in \S 4. We prove Theorem \ref{intersection with sub} in \S 5.

\section{Preliminary}
Let $\g$ be a real semisimple Lie algebra and let $\sigma$ be the Cartan involution of $\g$ such that
 \begin{center}
    $\g=\mathfrak{k}+ \mathfrak{p},$
\end{center} satisfying  $[\mathfrak{k},\mathfrak{k}]\subset\mathfrak{k}$, $[\mathfrak{k},\mathfrak{p}]\subset \mathfrak{p}$ and $[\mathfrak{p},\mathfrak{p}] \subset \mathfrak{k}$. Any two Cartan decompositions are conjugate to each other under the adjoint action of $ G$. Let $\mathfrak{m}$ be a maximal abelian subalgebra in $\mathfrak{p}$. Any two maximal subalgebras $\mathfrak{m_1}, \mathfrak{m_2}$ in $\mathfrak{p}$ are conjugate under the action of $K$. We recall that a Cartan subalgebra $\mathfrak{h}$ of $\g$ is called standard with respect to a standard triple $(\mathfrak{k},\mathfrak{p},\mathfrak{m})$ if the toridal part $\mathfrak{h}^+$ of $\mathfrak{h}$ ($\mathfrak{h}^+=\{X\in \mathfrak{h}$: all eigen values of $ad X$ are real$\}$) is conatained in $\mathfrak{k}$ and the vector part $\mathfrak{h}^-$ of $\mathfrak{h}$ ($\mathfrak{h}^-=\{X\in \mathfrak{h}$: all eigen values of $ad X$ are purely imaginary$\}$) is contained in $\mathfrak{m}$.

It follows from [\cite{Su}, Theorem 2] that it suffices to characterize the standard Cartan subalgebra with respect to a fixed triple $(\mathfrak{k},\mathfrak{p},\mathfrak{m})$. Additionally, two standard Cartan subalgebras are conjugate under the adjoint action of the group $G$ if and only if they are conjugate under the adjoint action of $K$ (see Corollary 1 of \cite{Su}).  Therefore, we fix the following setup for further analysis. Let $\g$ be a real semisimple Lie algebra and $\sigma(X)=-X^{t}$ (transpose of $-X$) for all $X\in \g$. Then, fix a triple $(\mathfrak{k},\mathfrak{p},\mathfrak{m})$, where $\mathfrak{k}$ contains skew-symmetric matrices of $\g$,  $\mathfrak{p}$ contains symmetric matrices of $\g$ and $\mathfrak{m}$ contains diagonal matrices of $\mathfrak{p}$. With respect to the fixed triple, Sugiura in \cite{Su} identifies the non-conjugate classes of Cartan subalgebras of $\mathfrak{g}$ (under the action of $K$). Consider a standard Cartan subalgebra $\mathfrak{h_0}$  of $\g$ such that $\mathfrak{h}^-_0=\mathfrak{m}$. Let $\mathbf{R}(\mathfrak{h_0})$ be the root system associated to $\mathfrak{h_0}$ and $W(\mathbf{R})$ be the Weyl group generated by reflections through the simple roots.

We now recall  [\cite{Su}, Theorem 5], which provides a recipe to cook Cartan subalgebras from a subset of a root system.

\begin{theorem}\label{constuctionofCartan}
    
 Let $\g$ be a real semisimple Lie algebra, and $(\mathfrak{k},\mathfrak{p},\mathfrak{m})$ be a standard triple of $\g$, and let $\mathfrak{h}_0$ be a Cartan sublagebra of $\g$ such that $\mathfrak{h}^-_0=\mathfrak{m}$. For any subspace $\mathfrak{n}$ of $\mathfrak{m}$, we denote by $\mathfrak{l}$ the subspace of $\mathfrak{m}$ defined by 
\begin{center}
    $\mathfrak{l}=\mathfrak{n}^{\perp}\cap \mathfrak{m}= \{X\in \mathfrak{m}: B(X,\mathfrak{n})=0\}$,
\end{center}
 where $B$ is the killing form. Then a subspace $\mathfrak{n}$ of $\mathfrak{m}$ becomes the vector part $\mathfrak{h}^-$ of a standard Cartan subalgebra $\mathfrak{h}$ if and only if there exists $l$ root ($l=\dim \mathfrak{l}$) $\alpha_1,\alpha_2, \ldots, \alpha_l$ such that 
 \begin{itemize}
     \item[{$(1)$}] $\alpha_i\pm \alpha_j \notin \mathbf{R}(\mathfrak{h_0})$, $1\leq i,j\leq l$ and $\alpha_i\pm \alpha_j\ne 0 $ if $i\ne j$.
     \item[{$(2)$}] $\mathfrak{l}=\sum^{l}_{i=1}\mathbb{R} H_{\alpha_i}$,
 \end{itemize}
 where $H_{\alpha_i}$ is unique element of $\mathfrak{h}_0$ associated to a root $\alpha_i$ such that $\alpha_i(H)=B(H_{\alpha_i},H)$ for all $H \in \mathfrak{h}_0$.
\end{theorem}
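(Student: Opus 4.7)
The plan is to prove both directions of the characterization via Cayley transforms associated to real roots of $\h_0$, i.e., roots that are real-valued on $\mathfrak{m}=\h_0^-$. Given a real root $\alpha$, one normalizes the root vectors $E_{\pm\alpha}\in\g^{\mathbb{C}}$ so that $Y_\alpha:=E_\alpha-E_{-\alpha}$ actually lies in $\g$ (using the Cartan involution $\sigma$), and then defines $c_\alpha:=\Ad(\exp(\tfrac{\pi}{4}Y_\alpha))$. A direct computation shows that $c_\alpha$ sends $H_\alpha\in\mathfrak{m}$ into $\mathfrak{k}$ (producing a toroidal element of the new Cartan subalgebra) while fixing $\ker(\alpha)\cap\h_0$ pointwise.

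For the necessity direction, I would suppose $\mathfrak{n}=\h^-$ for some standard Cartan subalgebra $\h$ of $\g$ and invoke the classical fact (the Kostant cascade, or Sugiura's earlier Theorem 4) that $\h$ is obtained from $\h_0$ by a composition of Cayley transforms along a set of pairwise strongly orthogonal real roots $\alpha_1,\dots,\alpha_l$ of $\h_0$. Condition (1) records precisely this strong orthogonality. The co-roots $H_{\alpha_1},\dots,H_{\alpha_l}$ are exactly the elements of $\mathfrak{m}$ that are moved out of $\mathfrak{m}$ by the composition of transforms, so they span $\mathfrak{l}=\mathfrak{n}^\perp\cap\mathfrak{m}$; a rank count $l=\dim\mathfrak{l}$ forces equality, giving (2).

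For the sufficiency direction, given roots $\alpha_1,\dots,\alpha_l$ satisfying (1) and (2), I would define $c:=c_{\alpha_l}\circ\cdots\circ c_{\alpha_1}$ and set $\h:=c(\h_0)$. The strong orthogonality forces $[Y_{\alpha_i},Y_{\alpha_j}]=0$ for $i\ne j$, since $\alpha_i\pm\alpha_j\notin\mathbf{R}(\h_0)$ implies $[E_{\pm\alpha_i},E_{\pm\alpha_j}]=0$; consequently the exponents commute, the transforms commute, and $c$ descends to a well-defined automorphism of $\g^{\mathbb{C}}$ preserving the real form $\g$. The image $\h$ is again a Cartan subalgebra; by the individual action of each $c_{\alpha_i}$, its vector part is precisely $\mathfrak{n}=\mathfrak{l}^\perp\cap\mathfrak{m}$, while its toroidal part picks up the compact rotates of the $H_{\alpha_i}$ together with $\h_0^+$.

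The principal obstacle is verifying that strong orthogonality is exactly the right hypothesis for the composition of Cayley transforms to preserve the real structure and to split cleanly into toroidal and vector parts. The condition $\alpha_i+\alpha_j\notin\mathbf{R}(\h_0)$ enforces $[E_{\alpha_i},E_{\alpha_j}]=0$; the condition $\alpha_i-\alpha_j\notin\mathbf{R}(\h_0)$ for $i\ne j$ enforces $[E_{\alpha_i},E_{-\alpha_j}]=0$. One must also argue that fewer than $l=\dim\mathfrak{l}$ roots cannot suffice, which rests on the linear independence of the $H_{\alpha_i}$ furnished by strong orthogonality and on each Cayley transform reducing the vector-part dimension by exactly one.
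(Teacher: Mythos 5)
You should first note that the paper contains no proof of this statement to compare against: it is quoted verbatim from Sugiura (\cite{Su}, Theorem 5), so your proposal has to be measured against the standard arguments (Sugiura's, or Knapp's theory of Cayley transforms). Measured that way, the sufficiency half of your outline fails at its central computational step. If $Y_\alpha=E_\alpha-E_{-\alpha}$ genuinely lies in $\g$, then $c_\alpha=\Ad(\exp(\tfrac{\pi}{4}Y_\alpha))$ is an inner automorphism of $\g$ given by an element of the adjoint group, so $c(\mathfrak{h}_0)$ is $\Ad(G)$-conjugate to $\mathfrak{h}_0$ and its vector part still has dimension $\dim\mathfrak{m}$; it can never be a standard Cartan subalgebra with vector part $\mathfrak{n}$ unless $l=0$. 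In particular the claim that $c_\alpha$ sends $H_\alpha$ into $\k$ is false: ${\rm ad}(H_\alpha)$ has nonzero real eigenvalues, a property preserved by every automorphism of $\g$, while toroidal elements have purely imaginary ${\rm ad}$-eigenvalues; concretely, the one-parameter group generated by $Y_\alpha$ only mixes $H_\alpha$ with $E_\alpha+E_{-\alpha}$ and never lands in the toroidal part. Already for $\mathfrak{sl}(2,\R)$ no inner automorphism carries the split Cartan subalgebra $\R\,\diag(1,-1)$ to the compact one $\mathfrak{so}(2)$. The genuine Cayley transform attached to a real root carries a factor $i$: it is an automorphism $d_\alpha$ of $\g^{\C}$ which does \emph{not} preserve $\g$, the new Cartan subalgebra is $\g\cap d_\alpha(\mathfrak{h}_0^{\C})$, and one must then prove that this intersection is a real form of $d_\alpha(\mathfrak{h}_0^{\C})$, that it is standard, and that its vector part is exactly $\mathfrak{n}$ --- none of which appears in your outline.

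A cleaner repair (and essentially what Sugiura does, and what this paper itself does later when proving Theorem \ref{s.s liealgebra}(1)) is to bypass Cayley transforms altogether and exhibit the Cartan subalgebra directly as $\mathfrak{n}\oplus\mathfrak{h}_0^+\oplus\sum_{i}\R\,U_{\alpha_i}$, where $U_{\alpha_i}=E_{\alpha_i}+E_{-\alpha_i}$ is normalized (via the Cartan involution) to lie in $\k$: condition $(1)$ gives that these elements commute with each other and with $\mathfrak{n}$, and a rank count gives maximality. Your necessity half is also too thin: the ``classical fact'' you invoke --- that every standard Cartan subalgebra is reached from $\mathfrak{h}_0$ by Cayley transforms along pairwise strongly orthogonal real roots of $\mathfrak{h}_0$ whose coroots span $\mathfrak{l}$ --- is, in substance, the very statement being proved (and it is not the Kostant cascade, which is a particular maximal strongly orthogonal family constructed for other purposes). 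A non-circular argument must actually manufacture the roots $\alpha_1,\ldots,\alpha_l$ from the given standard Cartan subalgebra $\mathfrak{h}$ with $\mathfrak{h}^-=\mathfrak{n}$; that construction, not the bookkeeping about dimensions, is where the real content of the theorem lies.
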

\begin{definition}
    A set of positive roots $F=\{\alpha_1,\alpha_2,\ldots, \alpha_l\}$ that satisfy the condition (1) of the above theorem is called an admissible root system.
\end{definition}

\begin{remark}{\label{preserverAdmissibleRootSystem}}
   We say that $\sigma \in W(\mathbf{R}) $ fixes an admissible root system $F$ if the subspace $\mathfrak{l}$ (see condition (2) of the above theorem) of $\mathfrak{g}$ remains the same for both $F$ and $\sigma(F)$.
\end{remark}

\noindent\textbf{ Notations:}
Let $G$ be a Lie group with the identity $e$ and let $M$ be a subgroup of $G$. Let $M^0$ denote the connected component 
of the identity $e$ in $M$,  $[M, M]$ denote the commutator subgroup of $M$, and let $Z(M)$ denote the center of $M$. The $N_G(M)$ 
and \ $Z_G(M)$ denote the normalizer and  centralizer of $M$ in $G$, respectively. If $M$ is a characteristic subgroup, then $N_G(M)$ and  $Z_G(M)$ are also characteristic subgroups.

  \section{Automorphism invariant Cartan subgroup of Lie groups}
Let $G$ be a Lie group with associated Lie algebra $\mathfrak g$. A subalgebra $\mathfrak h$ of $\mathfrak g$ is said to be a Cartan subalgebra if it is a maximal nilpotent Lie subalgebra and its normalizer in $\mathfrak g$ is itself, i.e., $N_\mathcal G(\mathfrak h)=\mathfrak h.$  Chevalley showed that there is a one-to-one correspondence between Cartan subgroups of $G$ and Cartan subalgebras of $\mathfrak g$. That is, the Lie algebra of any Cartan subgroup of $G$ is
a Cartan subalgebra of $\mathfrak g$, and conversely, given a Cartan subalgebra
$\mathfrak h$ of $\mathfrak g$, there exists a unique Cartan subgroup $H$ of $G$ whose Lie algebra is $\mathfrak h$.
We recall the following result by Borel and Mostow (\cite{BM}).

\begin{theorem}\label{BM}
Let $\mathfrak g$ be a Lie algebra, and let $\Gamma\subset {\rm Aut}(\mathfrak g)$ be a super-solvable subgroup of semisimple automorphisms. Then there exists a Cartan subalgebra $\mathfrak h$ of $\mathfrak g$ such that $\Phi(\mathfrak h)=\mathfrak h$, for all $\Phi\in\Gamma.$
\end{theorem}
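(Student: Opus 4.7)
The plan is to prove the theorem by induction on the length $r$ of a super-solvable chain $\Gamma = \Gamma_0 \supset \Gamma_1 \supset \cdots \supset \Gamma_r = \{e\}$, reducing to the cyclic case at each step. The crucial ingredients will be a single-automorphism version of the result, together with an equivariant correspondence between stable Cartan subalgebras of $\mathfrak{g}$ and Cartan subalgebras of the fixed subalgebra.

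First I would establish the base case: a single semisimple automorphism $\sigma \in {\rm Aut}(\mathfrak{g})$ stabilizes some Cartan subalgebra. Because the ground field has characteristic zero and $\sigma$ is semisimple, $\mathfrak{g}$ decomposes into $\sigma$-eigenspaces (after extension to an algebraic closure) and $\mathfrak{g}^{\sigma}$ is a genuine Lie subalgebra. I would choose a Cartan subalgebra $\mathfrak{a}$ of $\mathfrak{g}^{\sigma}$; then $\sigma$ fixes $\mathfrak{a}$ pointwise, so $\sigma$ stabilizes its centralizer $Z_{\mathfrak{g}}(\mathfrak{a})$. The main technical claim is that $Z_{\mathfrak{g}}(\mathfrak{a})$ is actually a Cartan subalgebra of $\mathfrak{g}$, which one proves by picking a suitably regular element of $\mathfrak{a}$ whose centralizer in $\mathfrak{g}$ is nilpotent and self-normalizing, using the spectral decomposition to lift ${\rm ad}$-semisimplicity from $\mathfrak{g}^{\sigma}$ to $\mathfrak{g}$.

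For the inductive step, assume the result holds for all super-solvable chains of length less than $r$. Applying the inductive hypothesis to $\Gamma_1$, I would promote the base-case bijection to the group level:
\[
\{\,\Gamma_1\text{-stable Cartans of } \mathfrak{g}\,\} \;\longleftrightarrow\; \{\,\text{Cartans of } \mathfrak{g}^{\Gamma_1}\,\}, \qquad \mathfrak{h} \mapsto \mathfrak{h} \cap \mathfrak{g}^{\Gamma_1}, \quad \mathfrak{a} \mapsto Z_{\mathfrak{g}}(\mathfrak{a}).
\]
Normality of $\Gamma_1$ in $\Gamma$ makes $\mathfrak{g}^{\Gamma_1}$ invariant under $\Gamma$, and each element of $\Gamma / \Gamma_1$ restricts to a semisimple automorphism of $\mathfrak{g}^{\Gamma_1}$. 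Applying the base case on $\mathfrak{g}^{\Gamma_1}$ to a generator of the cyclic quotient $\Gamma/\Gamma_1$ produces a Cartan subalgebra $\mathfrak{a} \subset \mathfrak{g}^{\Gamma_1}$ stable under $\Gamma / \Gamma_1$. Then $Z_{\mathfrak{g}}(\mathfrak{a})$ is $\Gamma_1$-stable by the correspondence, and since any lift of the generator carries $\mathfrak{a}$ to itself, $Z_{\mathfrak{g}}(\mathfrak{a})$ is stable under every element of $\Gamma$.

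The main obstacle will be the equivariant correspondence, in particular verifying that $Z_{\mathfrak{g}}(\mathfrak{a})$ is genuinely a Cartan subalgebra — nilpotent and self-normalizing — whenever $\mathfrak{a}$ is a Cartan subalgebra of the fixed subalgebra. This is precisely where the semisimplicity of every element of $\Gamma$ is indispensable: it yields a simultaneous weight decomposition of $\mathfrak{g}$ under the action of $\Gamma_1$, so that regularity inside $\mathfrak{g}^{\Gamma_1}$ propagates to a regular element of $\mathfrak{g}$ with nilpotent centralizer. The super-solvable hypothesis on $\Gamma$ plays its role by ensuring that at each inductive step one only needs to handle a cyclic extension, for which the single-automorphism argument suffices.
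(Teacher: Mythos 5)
First, note that the paper does not actually prove this statement: Theorem \ref{BM} is quoted verbatim from Borel--Mostow \cite{BM}, so there is no in-paper argument to compare yours with; your sketch has to stand on its own, and as written it does not.

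The central claim of your base case --- that for a semisimple $\sigma\in\Aut(\mathfrak g)$ and a Cartan subalgebra $\mathfrak a$ of $\mathfrak g^{\sigma}$ the centralizer $Z_{\mathfrak g}(\mathfrak a)$ is a Cartan subalgebra of $\mathfrak g$ --- is false for general (non-semisimple) $\mathfrak g$. Take $\mathfrak g$ the three-dimensional Heisenberg algebra with basis $x,y,z$, $[x,y]=z$, and $\sigma(x)=x$, $\sigma(y)=-y$, $\sigma(z)=-z$; this is a semisimple automorphism, $\mathfrak g^{\sigma}=\langle x\rangle=\mathfrak a$, but $Z_{\mathfrak g}(\mathfrak a)=\langle x,z\rangle$ is a proper subalgebra of a nilpotent algebra, hence not self-normalizing (its normalizer is all of $\mathfrak g$) and not a Cartan subalgebra; the unique Cartan subalgebra here is $\mathfrak g$ itself. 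The same example shows your proposed correspondence $\mathfrak h\mapsto\mathfrak h\cap\mathfrak g^{\Gamma_1}$, $\mathfrak a\mapsto Z_{\mathfrak g}(\mathfrak a)$ is not a bijection, and the forward map already fails elsewhere: for $\mathfrak{sl}(2)$ with $\sigma$ the automorphism swapping $e$ and $f$ and sending $h\mapsto-h$, the diagonal Cartan subalgebra is $\sigma$-stable yet meets $\mathfrak g^{\sigma}=\langle e+f\rangle$ only in $0$. The difficulty your sketch elides is exactly the hard content of \cite{BM}: elements of $\mathfrak a$ may be $\mathrm{ad}$-nilpotent on $\mathfrak g$, so no ``regular element of $\mathfrak a$'' with nilpotent self-normalizing centralizer need exist. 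The correct substitute for $Z_{\mathfrak g}(\mathfrak a)$ is the Fitting-null (Engel) subalgebra $\mathfrak g_{0}(\mathrm{ad}\,\mathfrak a)$, combined with an induction on $\dim\mathfrak g$, and the degenerate case where $\mathfrak a$ acts nilpotently (in particular $\mathfrak g^{\sigma}=0$) requires the genuinely nontrivial Borel--Mostow theorem that a semisimple automorphism without nonzero fixed vectors forces $\mathfrak g$ to be nilpotent. Your reduction of the super-solvable case to cyclic quotients via normality of $\Gamma_1$ is a reasonable outline of the standard strategy, but with the key lemma false and the bijection unavailable, the inductive step has no foundation as stated.
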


Now, we prove Proposition \ref{Lie group} by using Theorem \ref{BM}.

\begin{proof} [Proof of Proposition \ref{Lie group}:]
Let $\hat \Gamma=\{d\psi|\psi\in\Gamma\}.$ Then $\hat\Gamma$ is a supper solvable subgroup of semisimple automorphisms in $\rm Aut(\mathfrak g)$ . Then there exists a Cartan subalgebra $\mathfrak h$ of $\mathfrak g$ such that $d\psi(\mathfrak h)=\mathfrak h$ for all $d\psi\in\hat\Gamma$ by Theorem \ref{BM}.
Let $H$ be the Cartan subgroup of $G$ such that $L(H)=\mathfrak h.$  Note that $\psi(H)$ is a Cartan subgroup of $G$, and $L(\psi(H))=d\psi(\mathfrak h)=\mathfrak h=L(H).$ Therefore, both $\psi(H)$ and $H$ are Cartan subgroups of $G$ with the same Lie algebra $\mathfrak h$. Since there is a one-to-one correspondence between Cartan subgroups of $G$ and Cartan subalgebras of $\mathfrak g$, we have $\psi(H)=H$. This proves that $H$ is $\Gamma$-stable.
\end{proof}

To prove Theorem \ref{quotient modulo M}, we begin with the following lemma.

\begin{lemma}\label{connected component}
Let $G$ be a connected Lie group, and let $H$ be a Cartan subgroup of $G$. Then the following statements hold:
\begin{enumerate}
\item[{$(1)$}] Let $M$ be a closed connected subgroup of $G$. If $H^0\subset M$, then $H\cap M$ is a Cartan subgroup of $M$. 
 
\item[{$(2)$}] Let $R$ be the radical, and $N$ be the nilradical of $G$. Then $H^0R=H^0N$ is closed and $H^0$ is a Cartan subgroup of $H^0R=H^0N$.

\end{enumerate}
\end{lemma}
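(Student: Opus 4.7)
For part (1), the plan is to use the Chevalley bijection between Cartan subgroups and Cartan subalgebras cited at the start of \S 3, combined with the observation that a Cartan subalgebra of $\mathfrak{g}$ that happens to lie in a subalgebra $\mathfrak{m}$ remains Cartan in $\mathfrak{m}$. Concretely, the inclusion $H^0 \subset M$ gives $\mathfrak{h} := L(H) \subset \mathfrak{m} := L(M)$, and the chain $N_{\mathfrak{m}}(\mathfrak{h}) \subset N_{\mathfrak{g}}(\mathfrak{h}) = \mathfrak{h}$ forces $N_{\mathfrak{m}}(\mathfrak{h}) = \mathfrak{h}$, so $\mathfrak{h}$ is a Cartan subalgebra of $\mathfrak{m}$. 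Let $H_M$ be the unique Cartan subgroup of $M$ with Lie algebra $\mathfrak{h}$. I would then identify $H \cap M$ with $H_M$: both have identity component $H^0$ and both are nilpotent subgroups of $M$, and the uniqueness, through Chevalley, of the Cartan subgroup of $G$ (respectively $M$) containing the connected nilpotent $H^0$ forces $H_M \subset H$ and $H \cap M \subset H_M$, yielding equality.

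For part (2), the first step is the Lie-algebra identity $\mathfrak{h} + \mathfrak{r} = \mathfrak{h} + \mathfrak{n}$, from which $H^0 R = H^0 N$ follows by uniqueness of the connected Lie subgroup with a given Lie algebra. Using the generalized weight-space decomposition of $\mathfrak{g}$ under $\ad(\mathfrak{h})$, the ideal $\mathfrak{r}$ splits as $(\mathfrak{r} \cap \mathfrak{h}) + \sum_{\alpha \neq 0}(\mathfrak{r} \cap \mathfrak{g}_\alpha)$. The classical identity $[\mathfrak{g}, \mathfrak{r}] \subset \mathfrak{n}$ implies that $\ad(h)$ annihilates $\mathfrak{r}/\mathfrak{n}$ for every $h \in \mathfrak{h}$, so any generalized $\alpha$-eigenvector in $\mathfrak{r}$ with $\alpha \neq 0$ must project to zero in $\mathfrak{r}/\mathfrak{n}$, whence $\mathfrak{r} \cap \mathfrak{g}_\alpha \subset \mathfrak{n}$ for every such $\alpha$, and therefore $\mathfrak{r} \subset \mathfrak{h} + \mathfrak{n}$.

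Closedness of $H^0 R$ is handled by passing to the semisimple quotient $G/R$: the image $H^0 R / R$ coincides with the identity component of $HR/R$, which by the result of \cite{MS21} recalled in the introduction is a Cartan subgroup of the semisimple Lie group $G/R$; since Cartan subgroups of semisimple Lie groups are closed, so is their identity component, and pulling back along the quotient map exhibits $H^0 R$ as the preimage of a closed subgroup. Finally, to see that $H^0$ is a Cartan subgroup of the connected solvable Lie group $H^0 N$, I would apply part (1) to the pair $H \subset G$ with $M$ replaced by $H^0 N \supset H^0$, concluding that $H \cap (H^0 N)$ is a Cartan subgroup of $H^0 N$; invoking the classical fact that Cartan subgroups of connected solvable Lie groups are connected, this Cartan subgroup must equal its identity component, which is $H^0$.

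The main obstacle I anticipate is the weight-space argument establishing $\mathfrak{r} \cap \mathfrak{g}_\alpha \subset \mathfrak{n}$ for $\alpha \neq 0$; everything rests on correctly exploiting the structural identity $[\mathfrak{g}, \mathfrak{r}] \subset \mathfrak{n}$ together with the semisimplicity of the relevant part of $\ad(\mathfrak{h})$ on $\mathfrak{r}/\mathfrak{n}$. Once this Lie-algebra identity is in hand, the closedness and Cartan-property conclusions follow reasonably directly from the quoted results of \cite{MS21} and standard structure theory of solvable Lie groups.
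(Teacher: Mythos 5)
In part (1) the identification $H\cap M=H_M$ is not established. The uniqueness you invoke from the Chevalley correspondence only says: a subgroup \emph{already known} to be a Cartan subgroup of $G$ (respectively of $M$) and containing $H^0$ must equal $H$ (respectively $H_M$). Neither of your two inclusions falls under this principle. For $H\cap M\subset H_M$: the group $H\cap M$ is nilpotent with identity component $H^0$, but it is not yet known to be a Cartan subgroup of $M$ --- that is exactly what the lemma claims --- and maximal nilpotent subgroups containing a given nilpotent subgroup are not unique, so nilpotency plus the common identity component does not place $H\cap M$ inside $H_M$. For $H_M\subset H$: $H_M$ is a Cartan subgroup of $M$, not of $G$, so uniqueness in $G$ does not apply; what you would actually need is that every nilpotent subgroup of $G$ containing $H^0$ lies in $H$, which is a nontrivial structural fact and not a formal consequence of the bijection between Cartan subgroups and Cartan subalgebras. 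The missing ingredient is the explicit description the paper uses: by Lemma 3.9\,(3) of \cite{MS21}, the Cartan subgroup of $M$ with Lie algebra $\mathfrak{h}$ is $Z_M(H^0)H^0$, while $H=Z_G(H^0)H^0$; with these identities both inclusions are immediate (if $x=zh\in H\cap M$ with $z\in Z_G(H^0)$ and $h\in H^0\subset M$, then $z=xh^{-1}\in Z_M(H^0)$), giving $H\cap M=Z_M(H^0)H^0=H_M$. Your preliminary step, that $N_{L(M)}(\mathfrak{h})\subset N_{\mathfrak{g}}(\mathfrak{h})=\mathfrak{h}$ makes $\mathfrak{h}$ a Cartan subalgebra of $L(M)$, is fine and is what the paper asserts.

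Part (2) is essentially the paper's proof. Your derivation of $\mathfrak{h}+\mathfrak{r}=\mathfrak{h}+\mathfrak{n}$ from $[\mathfrak{g},\mathfrak{r}]\subset\mathfrak{n}$ via the generalized weight decomposition under $\mathrm{ad}(\mathfrak{h})$ is correct and is a self-contained alternative to the paper's use of $R=(H\cap R)N$ together with the connectedness of $H\cap R$ (which gives $H\cap R\subset H^0$, hence $H^0R=H^0N$). The closedness argument via $H^0R/R=(HR/R)^0$ and the final step --- apply part (1) to $M=H^0R$ and use that Cartan subgroups of connected solvable Lie groups are connected to conclude $H\cap H^0R=H^0$ --- coincide with the paper's. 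Note, however, that this last step invokes part (1), so the gap above must be repaired for part (2) to be complete as written.
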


\begin{proof}
$(1):$ Let $M$ be a closed connected subgroup of $G$ such that $H^0\subset M$. 
Let $\mathfrak h$ be a subalgebra for 
$H^0$ in the Lie algebra $L(M)\subset L(G)$. Then $\mathfrak h=L(H^0)$ is a Cartan subalgebra of $L(M)$. Now by Lemma 3.9\,(3) of \cite{MS21},
there exists a Cartan subgroup $H'$ of $M$ such that $H'=\{g \in M \mid \Ad(g)X = X \mbox{ for all } X \in \mathfrak h \}\exp(\mathfrak{h})$, 
where $\Ad(g)$ is the Lie algebra automorphism corresponding to the inner automorphism of $G$ by $g$ and $\exp:\mathfrak{m}\to M$ is the 
exponential map. Since $H^0$ is a connected nilpotent Lie subgroup, $\exp(\mathfrak{h})=H^0$, and 
$\{g \in M \mid \Ad(g)X = X \mbox{ for all } X \in \mathfrak{h}\}=Z_M(H^0)$. Therefore, $H'=Z_M(H^0)H^0$. We already know from Lemma 3.9\,(3) of 
\cite{MS21}, that $H=Z_{G^0}(H^0)H^0$ is a Cartan subgroup of $G$. Therefore, $H'=H\cap M$. 

$(2):$ It is known that $H\cap R$ is connected and $R=(H\cap R)N$. Since $H=H_S(H\cap R)$, we have $H^0R=H^0N$. 
Let $M=\overline{H^0R}$. As $H^0R/R$ is a connected component of the Cartan subgroup $HR/R$ of $G/R$, we have $H^0R/R$ is closed in $G/R$, and hence $H^0R$ is closed in $G$ (since $\pi:G\to G/R$ is continuous). Note that $H^0\subset M$, and hence
by part (1), $H\cap M$ is a Cartan subgroup of $M$.
Since $M$ is solvable, its Cartan subgroups are connected, and hence $H\cap M$ is connected. clearly, $H\cap M=H^0$ (since $M=H^0R$), and therefore, $H^0$ is a Cartan subgroup of $H^0R.$ 
\end{proof}

We now provide another proof of Lemma \ref{connected component} (2) by using Remark 3.10 of \cite{MS21}. It is observed in Remark 3.10 of \cite{MS21} that for a Cartan subgroup $H$ of $G$, $H\cap R=N_R(H^0)$. Note that $H^0R$ is a connected solvable Lie group. To show $H^0$ is a Cartan subgroup of $H^0R$, it is enough to show that $N_{H^0R}(H^0)=H^0$. Observe that, $N_{H^0R}(H^0)=H\cap R\subset H^0\subset N_{H^0R}(H^0)$, and hence $N_{H^0R}(H^0)=H^0$, as required.

\begin{lemma}\label{L}
Let $H$ be a Cartan subgroup of $G$ such that $H^0$ is $\Gamma$-stable. Then $H$ is $\Gamma$-stable.
\end{lemma}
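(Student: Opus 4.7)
The plan is to reduce the statement to the Chevalley-type bijection between Cartan subgroups of $G$ and Cartan subalgebras of $\mathfrak g$ recalled at the beginning of Section 3, using the same ``uniqueness'' trick that appears in the proof of Proposition \ref{Lie group}. The key preliminary observation is that $H^0$ is open in $H$, so the two have the same Lie algebra: $L(H)=L(H^0)$. Hence the Cartan subalgebra associated with $H$ by the Chevalley correspondence is actually pinned down by the identity component.

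Given an arbitrary $\psi\in\Gamma$, I would argue in two short steps. First, $\psi(H)$ is again a Cartan subgroup of $G$, because Cartan subgroups are preserved under Lie automorphisms (a maximal nilpotent subgroup is mapped to a maximal nilpotent subgroup, and the finite-index-in-normalizer condition is automorphism invariant). Second, its Lie algebra equals
\[
L(\psi(H)) \;=\; d\psi\bigl(L(H)\bigr) \;=\; d\psi\bigl(L(H^0)\bigr) \;=\; L\bigl(\psi(H^0)\bigr) \;=\; L(H^0) \;=\; L(H),
\]
where the fourth equality uses the hypothesis $\psi(H^0)=H^0$. Thus $\psi(H)$ and $H$ are two Cartan subgroups of $G$ sharing the same Cartan subalgebra, and the uniqueness clause of the Chevalley correspondence forces $\psi(H)=H$.

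I do not anticipate a genuine obstacle: the content of the lemma is essentially that ``same Lie algebra $\Rightarrow$ same Cartan subgroup'' once combined with $L(H)=L(H^0)$. If one prefers a group-theoretic derivation that bypasses Chevalley's theorem, an equivalent route is to use Lemma 3.9(3) of \cite{MS21} to write $H=Z_{G^0}(H^0)\,H^0$ and then observe that $\psi(Z_{G^0}(H^0))=Z_{G^0}(\psi(H^0))=Z_{G^0}(H^0)$, so $\psi(H)=\psi(Z_{G^0}(H^0))\,\psi(H^0)=H$. Either approach yields the conclusion in a few lines.
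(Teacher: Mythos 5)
Your argument is correct, and in fact you have given both of the natural proofs. The paper's own proof of this lemma is precisely your closing ``group-theoretic'' alternative: it invokes Lemma 3.9 of \cite{MS21} to write $H=H^0Z_G(H^0)$ and then notes that $\Gamma$-stability of $H^0$ forces $\psi(Z_G(H^0))=Z_G(\psi(H^0))=Z_G(H^0)$, hence $\psi(H)=H$. Your primary route---passing to Lie algebras, using $L(H)=L(H^0)$, the fact that automorphisms send Cartan subgroups to Cartan subgroups, and the uniqueness clause of Chevalley's correspondence---is the same mechanism the paper uses to prove Proposition \ref{Lie group}, and it works here without any gap; the only point worth making explicit is that $H$ is closed, so $L(H)=L(H^0)$ is legitimate, which you do note. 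The two approaches buy essentially the same thing: the decomposition $H=H^0Z_G(H^0)$ settles the matter entirely at the group level in one line, while the Chevalley-uniqueness argument is slightly longer but reuses machinery already deployed in the paper and makes transparent that the Cartan subgroup is pinned down by its identity component.
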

\begin{proof}
 Since $H^0$ is $\Gamma$-invariant, we have $\psi(H^0)=H^0$ for all $\psi\in\Gamma$. By Lemma 3.9 of \cite{MS21}, $H=H^0Z_G(H^0)$, and hence it is immediate that $H$ is $\Gamma$-stable. 
 \end{proof}

\begin{proposition}\label{quotient}
   Let $G$ be a connected Lie group with the radical $R$. Let $\Gamma\subset {\rm Aut}(G)$ be a super-solvable subgroup of semisimple automorphisms.
 Let $\hat\Gamma$ be the group of automorphisms on $G/R$ induced from $\Gamma$. Let $Q$ be a $\hat\Gamma$-stable Cartan subgroup of $G/R$, then there exists a $\Gamma$-stable Cartan subgroup $H$ of $G$ such that $HR/R=Q.$ 
\end{proposition}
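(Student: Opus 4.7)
The plan is to lift $Q$ to a Cartan subgroup of $G$ using the result from \cite{MS21}, then carve out a $\Gamma$-stable connected solvable subgroup $M \subseteq G$ in which Proposition \ref{Lie group} can be applied, and finally use conjugacy of Cartan subgroups inside the solvable group $M$ together with Lemma \ref{L} to transfer stability back to $G$.

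To begin, fix any Cartan subgroup $H$ of $G$ with $HR/R = Q$, which exists by \cite{MS21}. Set $M := H^0R$; Lemma \ref{connected component}(2) gives that $M$ is closed, and $M$ is plainly connected and solvable (as $R$ is solvable normal in $M$ and $M/R$ is a homomorphic image of the nilpotent group $H^0$). Let $\pi : G \to G/R$ denote the projection. As already observed in the proof of Lemma \ref{connected component}(2), $H^0R/R$ is the identity component of the Cartan subgroup $HR/R = Q$, so $\pi(M) = Q^0$ and consequently $M = \pi^{-1}(Q^0)$. Since $Q^0$ is characteristic in the $\hat\Gamma$-stable subgroup $Q$, the subgroup $Q^0$ is $\hat\Gamma$-stable, and therefore $M$ is $\Gamma$-stable. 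The restrictions $\{\psi|_M : \psi \in \Gamma\}$ form a super-solvable subgroup of $\Aut(M)$ consisting of semisimple automorphisms, because the restriction of a semisimple linear map to an invariant subspace remains semisimple. Applying Proposition \ref{Lie group} to $M$ then yields a $\Gamma$-stable Cartan subgroup $\tilde H$ of $M$.

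Because $M$ is a connected solvable Lie group, all Cartan subgroups of $M$ are connected and conjugate under $M$; since $H^0$ is itself a Cartan subgroup of $M$ by Lemma \ref{connected component}(2), there exists $m \in M$ with $\tilde H = m H^0 m^{-1}$. Define $H_1 := m H m^{-1}$, which is a Cartan subgroup of $G$ (as a conjugate of one) with identity component $H_1^0 = m H^0 m^{-1} = \tilde H$. Since $\tilde H$ is $\Gamma$-stable, Lemma \ref{L} forces $H_1$ to be $\Gamma$-stable. Finally, $\pi(m) \in \pi(M) = Q^0 \subseteq Q$, and since $Q$ is a subgroup, $\pi(H_1) = \pi(m)\, Q\, \pi(m)^{-1} = Q$, that is, $H_1R/R = Q$, as required.

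The main obstacle is the choice of $M$: one needs a $\Gamma$-stable, closed, connected, solvable subgroup of $G$ in which $H^0$ is already a Cartan subgroup, so that Proposition \ref{Lie group} applied to $M$ delivers a Cartan subgroup conjugate to $H^0$ inside $M$. The natural candidate $M = H^0R = \pi^{-1}(Q^0)$ works precisely because $Q^0$ is characteristic in $Q$; after that, the combination of Lemma \ref{connected component}(2), conjugacy of Cartan subgroups in connected solvable Lie groups, and Lemma \ref{L} completes the proof.
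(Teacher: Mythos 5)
Your proof is correct and follows essentially the same route as the paper: lift $Q$ to a Cartan subgroup $H$ of $G$ via \cite{MS21}, observe that the closed connected solvable group $H^0R$ is $\Gamma$-stable, apply Proposition \ref{Lie group} there, and transfer stability back to $G$ using conjugacy of Cartan subgroups in connected solvable groups together with Lemma \ref{connected component} and Lemma \ref{L}. The only cosmetic differences are that you establish $\Gamma$-stability of $H^0R$ by writing it as $\pi^{-1}(Q^0)$ with $Q^0$ characteristic in $Q$, and you close by noting $\pi(m)\in Q$ normalizes $Q$, whereas the paper chooses the conjugating element in the nilradical (hence in $R$) so that it acts trivially modulo $R$.
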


\begin{proof}
 Let $R$ be the radical of $G$ and let $\Gamma\subset{\rm Aut}(G)$. Since $R$ is a characteristic subgroup of $G$, it is stable under every automorphism in $\Gamma$, i.e., $R$ is $\Gamma$-stable. Consequently, the action of $\Gamma$ on $G$ descends to an induced action $\hat\Gamma \subset \Aut (G/R)$. 
It is given that $Q$ is a $\hat \Gamma$-stable Cartan subgroup of $G/R$. That is, for every $\hat\alpha\in\hat\Gamma$, we have $\hat\alpha(Q)=Q$.  This implies that $d\hat\alpha(L(Q))=L(Q)$, and therefore $d\hat\alpha(L(Q^0))=L(Q^0)$, where $Q^0$ denotes identity component of $Q$. 
 Since $\hat\alpha(Q)=Q$, it follows that $\hat\alpha(Q^0)\subset Q^0$. Further more, for $h\in Q^0$,  there exists $Y\in L(Q^0)$ such that $h=\exp(Y).$ Suppose $X\in L(Q^0)$ is such that $d\hat\alpha(X)=Y,$ and set  $g=\exp(X).$ Then clearly, $\hat\alpha(g)=h$, and it shows that $\hat\alpha(Q^0)= Q^0$

By Theorem 1.6 of \cite{MS21}, there exists a Cartan subgroup  $H$ of $G$ such that $Q=HR/R$. Since $\hat\alpha(Q)=Q$, and $\alpha(R)=R$ it follows that $\alpha(HR)=HR.$ Moreover, since $\hat\alpha(Q^0)=Q^0$ and $H^0R/R=Q^0$, we conclude that $\alpha(H^0R)=H^0R.$ Thus, both $HR$ and its connected component $(HR)^0=H^0R$ are $\Gamma$-stable. Since $H^0R$ is a connected solvable Lie group, there exists a Cartan subgroup $H_1$ of $H^0R$ which is $\Gamma$-stable by Proposition \ref{Lie group}. Note that $H^0$ is a Cartan subgroup of $H^0R$ by Lemma \ref{connected component}. Since all Cartan subgroups of a connected solvable Lie group are conjugate, there exists $g\in N\subset R$, the nilradical of $H^0R$, such that $gH^0g^{-1}=H_1$. 
Note that $gHg^{-1}$ is a Cartan subgroup of $G$ and $gHg^{-1}$ is $\Gamma$-stable by Lemma \ref{L}. For $x\in H$, we have $$gxg^{-1}R=gxR=xx^{-1}gxR=xR,$$ and hence $gHg^{-1}R/R\subset HR/R=Q$.
As both $gHg^{-1}R/R$ and $HR/R=Q$ are Cartan subgroups, we have $gHg^{-1}R/R=Q$. This proves the result.
\end{proof}

\begin{proof} [Proof of Theorem \ref{quotient modulo M}:]
    We may first assume that $M$ is connected. Indeed, since $\Gamma$ induces an automorphism $\hat\Gamma$ on $G/Z(G),$ if $Q \subset G/Z(G)$ is a $\hat\Gamma$-stable Cartan subgroup,  there exists a $\Gamma$-stable Cartan subgroup $H \subset G$  and $HZ(G)/Z(G)=Q$. Note that $M/M^0$ is the central subgroup in $G/M^0$. Therefore, to prove the desired result, it suffices to assume $M$ is connected. 

 Suppose $M$ is semisimple. We now show that the result holds under this assumption. Since the center $Z(M)$ is characteristic in $M$, it is normal in $G$. Moreover, because $M$ is semisimple, we have $Z(M)$ is discrete in $G$ and hence $Z(M)$ is central in $G$, that is, $Z(M)\subset Z(G)$.

 Since $M$ is a $\Gamma$-stable semisimple subgroup of $G$, its center $Z(M)$ is a discrete and $\Gamma$-stable subgroup, and hence lies in $Z(G)$. Therefore, we can replace $M$ by $M/Z(M)$ and $G$ by $G/Z(M)$, and hence assume that $M$ has trivial center. 
 Since $M$ is a normal semisimple center-free subgroup of $G$, we may write $G=M\times Z_G(M)$ by Lemma 3.9 of \cite{I}. As $M$ is $\Gamma$-stable there exists a $\Gamma$-stable Cartan subgroup $H_1$ of $M$ by Proposition \ref{Lie group}. Also,  $Z_G(M)$ is $\Gamma$-stable. Since  $Q$ is a $\hat\Gamma$-stable subgroup of $G/M$ (isomorphic to $Z_G(M)$), we may assume that $H_2:=Q$ is a $\Gamma$-stable Cartan subgroup of $Z_G(M)$. Clearly, $H:=H_1\times H_2$ is $\Gamma$-stable Cartan subgroup of $G$. This proves the result under this assumption.

 Suppose $M$ is a solvable subgroup contained in the radical $R$ of $G$. Then the result follows from Proposition \ref{quotient}.

 Suppose that $M$ is neither solvable nor semisimple. Let $M_R$ denote the radical of $M$. Then we have a surjection of maps
$$G\to G/M_R\to (G/M_R)/(M/M_R).$$

Let $\Tilde{\Gamma}$ be the induced action of $\Gamma$ on $G/M_R$. Note that the induced of $\Gamma$ on $G/M$ is same as the induced action of $\Tilde{\Gamma}$ on $G/M$.
 Since $M/M_R$ is a  $\Tilde{\Gamma}$-stable semi-simple subgroup of $G/M_R$, for a given $\hat{\Gamma}$-stable Cartan subgroup $Q$ of $G/M$, there exists a ${\Tilde\Gamma}$-stable Cartan subgroup $H'$ of $G/M_R$. As $M_R$ is solvable, in view of Proposition \ref{quotient}, there exists a $\Gamma$-stable Cartan subgroup $H$ of $G$. This proves the result.
\end{proof}

\section{On classical Simple Lie algebra}
The primary objective of this section is to prove Theorem \ref{s.s liealgebra}. To facilitate this, we recall that $\mathfrak{m}^- = \mathfrak{m}$, indicating that $\mathfrak{m}$ is a Cartan subalgebra of the Lie algebra $\mathfrak{g}$.
 
Let $\mathbf{R}(\mathfrak{m})$ denote the root system associated to the Cartan subalgebra $\mathfrak{m}$, and let $W(\mathbf{R})$ represent the Weyl group generated by reflections through the simple roots in $\mathbf{R}(\mathfrak{m})$.  According to Theorem 6.57 of \cite{K}, the Weyl group $W(\mathbf{R})$ is isomorphic to the quotient group ${N_K(\mathfrak{m})}/{Z_K(\mathfrak{m})}$, where $N_K(\mathfrak{m})$ and $Z_K(\mathfrak{m})$ denote the normalizer and centralizer of $\mathfrak{m}$ in $K$, respectively. This result implies that for each element $s \in W(\mathbf{R})$, there exists an element $k \in {N_K(\mathfrak{m})}/{Z_K(\mathfrak{m})}$ such that the adjoint action of $K$ maps root spaces accordingly: $\Ad (k)(E_{\alpha}) = E_{s\alpha}$, for every root $\alpha \in$ $\mathbf{R}(\mathfrak{m})$, where $E_{\alpha}$ denotes the root space corresponding to $\alpha$. 

Let $F_1, F_2, \ldots, F_m$ be the admissible root systems and $\mathfrak{h_1},\mathfrak{h_2},\ldots, \mathfrak{h_m}$ be the corresponding Cartan subalgebras (see Theorem 6 of \cite{Su}). Suppose $s \in W(\mathbf{R})$ fixes $F_i$ for all $i$. Then, it follows from Theorem \ref{constuctionofCartan} of \cite{Su} that the vector part of the Cartan subalgebra $\mathfrak{h_i}$, for all $i$, is invariant under $k \in {N_K(\mathfrak{m})}/{Z_K(\mathfrak{m})}$. However, it is not immediate that the toroidal parts of $\mathfrak{h_i}$ are also preserved by the same $k$. Therefore, the main objective of the proof of Theorem \ref{s.s liealgebra}(1) is to construct such Cartan subalgebras associated to $F_i$'s whose toroidal parts are also invariant under $k$.

\begin{proof} [Proof of Theorem \ref{s.s liealgebra}:]

$(1):$ Let $\mathfrak{m}$ be a Cartan subalgebra of $\mathfrak{g}$, and suppose $\dim \mathfrak{m} = n$. Set an admissible root system $F_1 = \{\alpha_1, \alpha_2, \dots, \alpha_r\}$, where $\alpha_i \in \mathbf{R}(\mathfrak{m})$. Now we determine a Cartan subalgebra associated to the admissible root system $F_1$. For this purpose, we define the subspace $\mathfrak{l}=\sum^{r}_{i=1}\mathbb{R} H_{\alpha_i}$. By construction, we have $\dim \mathfrak{l} = r$ (see Theorem \ref{constuctionofCartan} for notations). Consider the orthogonal complement of $\mathfrak{l}$ in $\mathfrak{m}$ with respect to the Killing form $B$. We denote this subspace by $$\mathfrak{n}=\mathfrak{l}^{\perp}\cap \mathfrak{m}= \{X\in \mathfrak{m}: B(X, H_{\alpha_i})=0 \ \text{for all} \ i\}.$$ Then the dimension of $\mathfrak{n}$ is $n - r$.

The dimension of the root space $E_{\alpha_i}$ for all $i$ is one,  we therefore assume $E_{\alpha_i}=<x_{\alpha_i}>$ for all $i$. Now, consider the subspace generated by the elements $U_{\alpha_i}=x_{\alpha_i}+x_{-\alpha_i}$ for all $1 \leq i \leq r$. The commutation relations between these elements are given by $$[U_{\alpha_i}, U_{\alpha_j}]= c_{\alpha_i,\alpha_j}U_{\alpha_i+\alpha_j}+c_{\alpha_i,-\alpha_j}U_{\alpha_i-\alpha_j},$$ where $i \ne j$,  $c_{\alpha_i,\alpha_j}$ and $c_{\alpha_i,-\alpha_j} $ are some constants. Since $F_1$ is an admissible root system, the sum ${\alpha_i+\alpha_j}$ and the difference ${\alpha_i-\alpha_j}$ are not a root; therefore we get that $$U_{\alpha_i+\alpha_j}=0 \text{\ and \ } U_{\alpha_i-\alpha_j}=0.$$ Moreover, for any $Y \in \mathfrak{n}$, the adjoint action satisfies $$[Y,x_{\alpha_i}+x_{-\alpha_i}]= c_{\alpha_i}B(Y,H_{\alpha_i})x_{\alpha_i}+c_{-\alpha_i}B(Y,H_{-\alpha_i})x_{-\alpha_i}.$$ Since $B(Y,H_{\alpha_i})=0$  (and therefore $B(Y,H_{-\alpha_i})=0$), it follows that the space generated by $U_{\alpha_i}$ for $1 \leq i \leq r$ is contained in the centralizer $Z_{\mathfrak{k}}(\mathfrak{n})$ and is an abelian subalgebra of dimension $r$. We denote it by $\mathfrak{e}$.

Let $\mathfrak{t}$ be maximal abelian subgroup of $Z_{\mathfrak{k}}(\mathfrak{n})$ that contains $\mathfrak{e}$. Then, by the construction $\mathfrak{t}+ \mathfrak{n}$ forms a Cartan subalgebra of $\mathfrak{g}$ (see \cite{K}, Proposition 6.47). Since 
$n=\dim(\mathfrak{t})+ n-r $ and $\mathfrak{e}$ is a $r$-dimensional abelian subalgebra contained in $\mathfrak{t}$, it follows that $\mathfrak{t}=\mathfrak{e}$. Therefore, $\mathfrak{h_1}=\mathfrak{e}+\mathfrak{n}$ is a Cartan subalgebra associated to the admissible root system $F_1$. 

By Theorem 6 of \cite{Su}, the subalgebra $\mathfrak{h_1}$ is the only Cartan subalgebra associated to the fixed admissible root system $F_1$.
Similarly, construct the  Cartan subalgebras $\mathfrak{h_2},\mathfrak{h_3}, \ldots, \mathfrak{h_m}$ associated to the admissible root systems $F_2,F_3,\ldots, F_m$, respectively. Suppose $s\in W(\mathbf{R})$  is an element that fixes all the admissible root systems. By the identification of $W(\mathbf{R})$ with ${N_K(\mathfrak{m})}/{Z_K(\mathfrak{m})}$, there exists $k \in {N_K(\mathfrak{m})}/{Z_K(\mathfrak{m})} $. Here $k=k.Z_K(\mathfrak{m})$, in particular choose identity $e \in Z_K(\mathfrak{m})$ and write $k.e=k$. Then, it is clear by the identification of $s$ with $k.e=k$ and the construction of $\mathfrak{h_i}$ that $k.e=k$ fixes all the Cartan subalgebras $\mathfrak{h_i}$ for all $i$. This proves $(1)$.

 $(2):$ Given that $Ad(g)(\mathfrak{h_i'})=\mathfrak{h_i'}$ for all $i$. Then $g$ can uniquely decompose as a product $g=kp$ such that $p$ fixes $h_i'^-$ point-wise for all $i$ (see \cite{Su}, Theorem 3).  We first observe that $(g\mathfrak{h_i'}g^{-1})^{-}=g\mathfrak{h_i}^{-}g^{-1}=k\mathfrak{h_i}^{-}k^{-1}.$
  Indeed, $\mathfrak{h_i'}=\mathfrak{h_i'}^{+}\oplus \mathfrak{h_i'}^{-}$ by Proposition 2 of \cite{Su}, it follows that $$g\mathfrak{h_i'}g^{-1}=g\mathfrak{h_i'}^{+}g^{-1}\oplus g\mathfrak{h_i'}^{-}g^{-1}=g\mathfrak{h_i'}^{+}g^{-1}\oplus k\mathfrak{h_i'}^{-}k^{-1}.$$
  Moreover, it is straightforward to verify that the component $k\mathfrak{h_i'}^{-}k^{-1}$ is precisely the vector part of $k\mathfrak{h_i'}k^{-1}.$  Using the assumption, $\mathfrak{h_i'}=g\mathfrak{h_i'}g^{-1}$, we conclude that $g\cdot \mathfrak{h_i'}^{-}=\mathfrak{h_i'}^{-}$, that is, $k\cdot \mathfrak{h_i'}^{-}=\mathfrak{h_i'}^{-}$.

Note that every element of $Z_{\mathfrak{k}}(\mathfrak{m})$ fixes $\mathfrak{h_i'}^{-}$ point-wise, as $\mathfrak{h_i'}^- \subset \mathfrak{m}$. Consequently, we have $k Z_{\mathfrak{k}}(\mathfrak{m})\cdot\mathfrak{h_i'}^-=\mathfrak{h_i'}^-$. Since $\mathfrak{h_i'}^-= \mathfrak{l_i}^{\perp}$ in $\mathfrak{m}$, it follows that $k Z_{\mathfrak{k}}(\mathfrak{m})\cdot\mathfrak{l_i}=\mathfrak{l_i}$. Therefore, $kZ_{\mathfrak{k}}(\mathfrak{m})$ corresponds to a Weyl group element $s \in W(\mathbf{R})$ that fixes the admissible root systems  associated to the subspace $\mathfrak{l_i}$, say $F_1,F_2,\ldots,F_m$. This proves $(2)$.
\end{proof} 
  
In the following subsections, we examine the real semisimple Lie algebras of types $A_n, B_n, C_n, D_n$ with respect to a standard tuple $(\mathfrak{k},\mathfrak{p},\mathfrak{m})$. Using a suitable Weyl group element associated to the root system of $\mathfrak{m}^{\mathbb{C}}$, that preserves the admissible root systems, we determine an element of $K$ that fixes all the Cartan subalgebras associated to the admissible root systems.

    \begin{subsection}{Real Lie algebra $\mathfrak{g}=\mathfrak{sl}(n,\mathbb{R})$ [$A_n$-type]} \label{sl(n,R)}
The Lie algebra $\mathfrak{g}=\mathfrak{sl}(n,\mathbb{R})= \{X \in \mathfrak{gl}(n,\mathbb{R}): {\rm Tr}(X)=0)\}$. Consider $\mathfrak{k}=\{X\in \mathfrak{g}: X^{t}=-X\}$ and $\mathfrak{p}=\{X\in \mathfrak{g}: X^{t}=X\}$. Then, $\mathfrak{g}=\mathfrak{k}+\mathfrak{p}$ is a Cartan decomposition. Let $\mathfrak{m}=\{D(h_1, h_2, \ldots, h_n): \sum_{i=1}^n h_i=0 \}$ denote the subalgebra of diagonal traceless matrices. Then $\mathfrak{m}$ is a Cartan subalgebra of $\mathfrak{sl}(n,\mathbb{R})$ and $\mathfrak{m}\subset{\mathfrak{p}}$. Furthermore, since $\mathfrak{m}$ consists entirely of symmetric matrices, its ``vector part" coincides with itself; $\mathfrak{m}^-=\mathfrak{m}$.
Define linear functional (character) $e_i$ on $\mathfrak{m}^{\mathbb{C}}$ by $$e_i(D(h_1, h_2, \ldots, h_n))=h_i,\;\;\; \forall \ 1\leq i \leq n.$$ Then the root system associated to $\mathfrak{m}$ is $$\mathbf{R}(\mathfrak{m})=\{\pm(e_i\pm e_j): 1\leq i < j \leq n\}.$$ The Weyl group of $\mathfrak{g}^{\mathbb{C}}$  with respect to $\mathfrak{m}^{\mathbb{C}}$ is isomorphic to the symmetric group $S_n$,  acting by permuting the characters $e_i$. 
       Following Equation (70) of \cite{Su}, consider the following admissible root systems \begin{center}
           $F_0=\phi$\\
           $F_1=\{e_1-e_2\}$\\
           $F_2=\{e_1-e_2, e_3-e_4\}$\\
           and
           $F_k=\{e_1-e_2, e_3-e_4,\ldots, e_{2k-1}-e_{2k}\}$,    
           \end{center} where $k=0,1,\ldots, [\frac{n+1}{2}]$. The Cartan subalgebra $\mathfrak{h_i} $ associated to the admissible root system $F_i$ has the following form. Each Cartan subalgebra $\mathfrak{h_i}$ consists of block diagonal matrices where the first $i$ blocks are $2\times2$ matrices of the form $\begin{pmatrix}
               h_j & \theta_j\\
               -\theta_j & h_j
           \end{pmatrix}$ for $1\leq j \leq i$ and the remaining blocks are of size one. The only permutation that fixes the admissible root system $F_i$, for all $i$, are transpositions of the form $(i,i+1)$, where $i$ is an odd integer,  along with compositions of such transpositions. The transposition $(i,i+1)$ correspond to a Wely group element $\sigma_i$ that sends $e_i$ to $e_{i+1}$, $e_{i+1}$ to $e_{i}$, and fixes all $e_j$ for $j \neq i, i+1$. Consider the block diagonal matrix $J_i$ whose  block$\begin{pmatrix}
               ii & i(i+1)\\
               (i+1)i & (i+1)(i+1)
           \end{pmatrix}=\begin{pmatrix}
               0 & 1\\
               -1 & 0
           \end{pmatrix}$ and 1 otherwise.
           Then $J_i \in K$ and it represents the Weyl group element $\sigma_i$. Therefore, for any Weyl group element that fixes the admissible root systems, one can construct a corresponding element in  $K$ by taking products of some of the $J_i$'s. It is straightforward to verify that each $J_i$ fixes all Cartan subalgebras, and their products do as well.

           \begin{proposition}
            Let $\mathfrak{h_i}$ be the Cartan subalgebra associated to the admissible root system $F_i$ for each $i$ of the Lie algebra $\mathfrak{sl}(n,\mathbb{R})$. Then, there exist non-trivial elements (determined as in Subsection \ref{sl(n,R)}) in $ K$ that fix $\mathfrak{h_i}$ for every $i$.
         \end{proposition}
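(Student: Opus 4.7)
The plan is to verify directly that each $J_i$ (with $i$ odd) preserves every Cartan subalgebra $\mathfrak{h_k}$ under conjugation, and then to observe that this property is automatically inherited by arbitrary products of such $J_i$. First I would confirm that $J_i \in K = SO(n)$ and is non-trivial: it differs from the identity only in the $2\times 2$ window at positions $(i, i+1)$, where it equals the rotation $\begin{pmatrix} 0 & 1 \\ -1 & 0 \end{pmatrix} \in SO(2)$, so $J_i$ is orthogonal with determinant one and distinct from the identity.

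The central step is a short case analysis of $J_i X J_i^{-1}$ for $X \in \mathfrak{h_k}$. Because $J_i$ acts as the identity outside the $(i, i+1)$-block, the conjugation affects only the corresponding block of $X$, and I would split according to where this block sits relative to the block structure of $\mathfrak{h_k}$. When $i = 2j-1$ with $j \leq k$, the relevant block of $X$ is of the form $h_j I + \theta_j \begin{pmatrix} 0 & 1 \\ -1 & 0 \end{pmatrix}$, which commutes with the corresponding block of $J_i$; hence $J_i$ commutes with $X$ outright. When $i \geq 2k+1$, the $(i, i+1)$-block of $X$ is diagonal, of the form $\mathrm{diag}(h_i, h_{i+1})$, and conjugation by the rotation merely swaps the two diagonal entries, so the result still lies in the diagonal portion of $\mathfrak{h_k}$. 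In either case $J_i X J_i^{-1} \in \mathfrak{h_k}$, and the reverse inclusion follows by replacing $J_i$ with $J_i^{-1}$.

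Since each individual $J_i$ preserves every $\mathfrak{h_k}$, any product of such matrices does as well, supplying the required family of non-trivial elements of $K$ that fix all the Cartan subalgebras simultaneously. I do not anticipate a genuine obstacle: the trace-zero condition defining $\mathfrak{sl}(n, \mathbb{R})$ is automatically preserved because conjugation preserves the trace, and the only subtlety that warrants attention is confirming that in the second case the swap $h_i \leftrightarrow h_{i+1}$ really keeps the result in $\mathfrak{h_k}$. Since $\mathfrak{h_k}$ imposes no constraint linking these individual entries beyond the global trace condition, this is immediate, and the proposition follows.
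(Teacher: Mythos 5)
Your argument is correct and follows essentially the same route as the paper, which defines the block matrices $J_i$ (for odd $i$) representing the transpositions $(i,i+1)$ and simply asserts that it is "straightforward to verify" that each $J_i$ and their products fix all the $\mathfrak{h_k}$. Your two-case check — commutation with the $2\times 2$ block $h_jI+\theta_j\bigl(\begin{smallmatrix}0&1\\-1&0\end{smallmatrix}\bigr)$ when $i=2j-1\le 2k-1$, and the harmless swap of two free diagonal entries when $i\ge 2k+1$ — just supplies the verification the paper leaves implicit.
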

       
\end{subsection}

   \begin{subsection}
       {Real symplectic Lie algebra $\mathfrak{g}=\mathfrak{sp}(n,\mathbb{R})$ [$C_n$-type]}

       The Lie algebra $\mathfrak{g}=\mathfrak{sp}(n,\mathbb{R})= \{X \in \mathfrak{gl}(2n,\mathbb{R}: X^{t}J+JX=0)\},$ where $X^{t}$ denotes transpose of the matrix $X$ and $J=\begin{pmatrix}
           0_n & I_n\\
           -I_n & 0_n
       \end{pmatrix}$. Here, $0_n \ \text{and} \ I_n$ denote the $n \times n$ zero and identity matrices, respectively. Consider $\mathfrak{k}=\{X\in \mathfrak{g}: X^{t}=-X\}$ and $\mathfrak{p}=\{X\in \mathfrak{g}: X^{t}=X\}$. Then, $\mathfrak{g}=\mathfrak{k}+\mathfrak{p}$ is a Cartan decomposition. The collection of diagonal matrices $\mathfrak{m}=\{D(h_1, h_2, \ldots, h_n,-h_1,-h_2,\ldots, -h_n ): h_i \in \mathbb{R}\}$ is a Cartan subalgebra of $\mathfrak{sp}(n,\mathbb{R})$ and $\mathfrak{m}\subset{\mathfrak{p}}$ that implies $\mathfrak{m}^-=\mathfrak{m}$.
Let $e_i$ denote the character on $\mathfrak{m}^{\mathbb{C}}$ defined as $$e_i(D(h_1, h_2, \ldots, h_n,-h_1,-h_2,\ldots, -h_n ))=h_i,\;\;\; \forall \ 1\leq i \leq n.$$ Then the root system associated to $\mathfrak{m}$ is $$\mathbf{R}(\mathfrak{m})=\{\pm(e_i\pm e_j), (1 \leq i < j \leq n), \pm 2e_i, (1\leq i\leq n)\}.$$ The Weyl group of $\mathfrak{g}^{\mathbb{C}}$ corresponding to $\mathfrak{m}^{\mathbb{C}}$ is isomorphic to $S_n \ltimes \mathbb{Z}_2^n$, where $S_n$ is the symmetric group on $n$ elements and $\mathbb{Z}_2^n$ represents the group of sign changes. The elements of this Weyl group permute the characters $e_i$ and change their signs.
Consider the admissible root system [\cite{Su}, Equation (80)] \begin{center}
           $F(k,l)=(2e_1,2e_2,\ldots,2e_k, e_{k+1}-e_{k+2}, \ldots,e_{k+2l-1}-e_{k+2l} ),$
       \end{center} where $k+2l\leq n$, $k\geq 0$ and $l\geq 0$.
       
\vspace{0.2 cm}
       \noindent\textbf{Claim:} The Weyl group element that fixes all the admissible root systems [see Remark \ref{preserverAdmissibleRootSystem}] other than identity is the reflection that maps each $e_i$ to $-e_i$.

        Let $\sigma$ be the transposition that sends $i$ to $j$ where $i \leq j$ and $i \neq n$. Then, $\sigma$ does not preserve $F(i,0)$. In the case when $i=n$, look for the admissible root systems of type $F(0,l)$(choose a suitable $l$ depending on the image of $i=n$). Therefore, observe that corresponding to every permutation, there exists an admissible system that is not preserved by that permutation. Next assume that $\tau$ be a reflection that does not send each $e_i$ to $-e_i$, in particular, assume that $\tau(e_i)=-e_i$ for all $i\neq 1$. Then look for the admissible root system $F(0,1)=\{e_1-e_2\}$ that is not preserved by $\tau$. Therefore, the only element of the Weyl group that fixes all the admissible root systems is a reflection $\tau$, satisfying $\tau(e_i)=-e_i$ for all $1\leq i\leq n$.

        Observe that the matrix $J\in K$. Also, as $\sigma$ send $e_i$ to $-e_i$ , the matrix $J$ is the corresponding element in $K$ such that $Jl_i=-l_i$ for all $l_i \in \mathfrak{l_i}$. Therefore, $J\mathfrak{l_i}=\mathfrak{l_i}$. Furthermore, $Jh_i\in \mathfrak{h_i}$ for all $h_i \in \mathfrak{h_i}$, where $\mathfrak{h_i}$ is the Cartan subalgebra associated to $F_i$ for all $i$.  Hence, $J$ preserves all the Cartan subalgebras. This proves the following result.

        \begin{proposition}
            Let $\mathfrak{h_i}$ be the Cartan subalgebra associated to the admissible root system $F_i$ for each $i$ of the Lie algebra $\mathfrak{sp}(n,\mathbb{R})$. Then, there exists a unique non-trivial element $J \in K$ that fixes $\mathfrak{h_i}$ for every $i$.
         \end{proposition}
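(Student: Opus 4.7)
The plan is to consolidate the Weyl-group analysis preceding the proposition into a proof, using the identification $W(\mathbf{R}) \cong N_K(\mathfrak{m})/Z_K(\mathfrak{m})$ (Theorem 6.57 of \cite{K}) as the bridge between root-system combinatorics and group elements. First I would invoke the Claim already established in the subsection: within $W(\mathbf{R}) \cong S_n \ltimes \mathbb{Z}_2^n$, the unique non-identity element that fixes every admissible root system $F(k,l)$ is the reflection $\tau : e_i \mapsto -e_i$.

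Second, I would exhibit the explicit representative $J = \begin{pmatrix} 0_n & I_n \\ -I_n & 0_n \end{pmatrix}$, i.e.\ the symplectic structure matrix itself, as a lift of $\tau$ to $N_K(\mathfrak{m})$. The required verifications are routine block computations: $J \in \mathrm{Sp}(n,\mathbb{R}) \cap O(2n,\mathbb{R}) = K$ (since $J^tJ = I_{2n}$ and $J^tJJ = J$), and $JHJ^{-1} = -H$ for every $H \in \mathfrak{m}$, so that $\mathrm{Ad}(J)|_{\mathfrak{m}}$ acts as $-\mathrm{Id}$ and therefore realises $\tau$.

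Third, I would deduce $\mathrm{Ad}(J) \cdot \mathfrak{h}_i = \mathfrak{h}_i$ following the construction used in the proof of Theorem \ref{s.s liealgebra}(1). The vector part $\mathfrak{l}_i^{\perp} \cap \mathfrak{m}$ is automatically $J$-invariant since $\mathrm{Ad}(J)$ is $-\mathrm{Id}$ on all of $\mathfrak{m}$. The toroidal part is spanned by $U_{\alpha_j} = x_{\alpha_j} + x_{-\alpha_j}$ for $\alpha_j \in F_i$; since $\mathrm{Ad}(J)$ swaps the root spaces $E_{\alpha_j}$ and $E_{-\alpha_j}$, it sends each $U_{\alpha_j}$ to a scalar multiple of itself. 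Hence $J$ preserves both parts of $\mathfrak{h}_i$ for every $i$.

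Fourth, for uniqueness of the non-trivial element, I would apply Theorem \ref{s.s liealgebra}(2) contrapositively: any $g \in K$ that fixes every $\mathfrak{h}_i$ projects to a Weyl group element fixing every $F_i$, and by the Claim that projection is either the identity or $\tau$. The main obstacle is sharpening this to literal uniqueness in $K$, rather than uniqueness modulo the centraliser $Z_K(\mathfrak{m})$: one must verify that no non-identity element of $Z_K(\mathfrak{m})$ individually gives a genuinely distinct element of $K$ preserving every $\mathfrak{h}_i$, so that the coset $J \cdot Z_K(\mathfrak{m})$ collapses to the single non-trivial choice $J$. I expect this last sharpening to be the subtle point of the argument, since for $\mathrm{Sp}(n,\mathbb{R})$ the centraliser $Z_K(\mathfrak{m})$ is a genuine finite $2$-group and its action on the $U_{\alpha_j}$ requires a careful case check to rule out redundant non-trivial lifts.
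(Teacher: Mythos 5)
Your existence argument (steps 1--3) is essentially the paper's own: establish the combinatorial Claim that the sign change $\tau\colon e_i\mapsto -e_i$ is the only non-identity element of $W(\mathbf{R})\cong S_n\ltimes\mathbb{Z}_2^n$ fixing every admissible system $F(k,l)$, realize $\tau$ by the matrix $J\in K$, and check directly that $\mathrm{Ad}(J)$ stabilizes each $\mathfrak{h}_i$. One small repair in your step 3: the fact that $\mathrm{Ad}(J)$ swaps $E_{\alpha}$ and $E_{-\alpha}$ only gives $\mathrm{Ad}(J)U_{\alpha}\in E_{\alpha}\oplus E_{-\alpha}$; to conclude it is a multiple of $U_{\alpha}$ you should add that $\mathrm{Ad}(J)$ preserves $\mathfrak{k}$, so it maps $U_{\alpha}$ into $(E_{\alpha}\oplus E_{-\alpha})\cap\mathfrak{k}=\mathbb{R}U_{\alpha}$. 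This is harmless and comparable in rigor to the paper's own verification.

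The substantive point is your step 4, which you flag but do not carry out: passing from uniqueness of the Weyl group element to uniqueness of an element of $K$. Be aware that the paper does not carry this out either -- its argument stops exactly where yours does, at uniqueness in $W(\mathbf{R})=N_K(\mathfrak{m})/Z_K(\mathfrak{m})$ -- and in fact the sharpening you propose cannot succeed as literally stated. Here $Z_K(\mathfrak{m})$ consists of the matrices $z=\mathrm{diag}(\epsilon_1,\dots,\epsilon_n,\epsilon_1,\dots,\epsilon_n)$ with $\epsilon_i=\pm1$; such a $z$ fixes $\mathfrak{m}$ pointwise (hence every vector part $\mathfrak{n}_i$) and multiplies each generator $U_{\alpha}$, $\alpha=2e_i$ or $e_i-e_j$, by a sign, hence stabilizes every toroidal part and therefore every $\mathfrak{h}_i$. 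For $n\geq 2$ and mixed signs, $\mathrm{Ad}(z)$ is a non-trivial automorphism distinct from $\mathrm{Ad}(J)$ (it is the identity on $\mathfrak{m}$, while $\mathrm{Ad}(J)=-\mathrm{id}$ there), and already $-J$ versus $J$ shows elementwise uniqueness in $K$ fails for every $n$. So the uniqueness assertion is only correct at the level of the Weyl group, equivalently for the induced action on $\mathfrak{m}$ modulo $Z_K(\mathfrak{m})$ -- which is precisely what your steps 1, 2 and 4 (via Theorem \ref{s.s liealgebra}(2), using that $\mathfrak{m}=\mathfrak{h}_1$ is among the fixed Cartan subalgebras, so any such $g$ lies in $N_K(\mathfrak{m})$) do establish. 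In short: your proposal proves everything the paper's proof proves, by the same route; the step you left open is not closable as stated, and the proposition's ``unique non-trivial element of $K$'' has to be read modulo $Z_K(\mathfrak{m})$.
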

   \end{subsection}

   \begin{subsection}{Real Orthogonal Lie algebra $\mathfrak{g}=\mathfrak{0}(2n+1,\mathbb{R})$ [$B_n$-type]}

The Lie algebra $\mathfrak{g}=\mathfrak{o}(n,\mathbb{R})= \{X \in \mathfrak{gl}(2n,\mathbb{R}: X^{t}J+JX=0)\},$ where $J=\begin{pmatrix}
           0_n & I_n & 0\\
           I_n & 0_n & 0\\
           0 & 0 & -1
       \end{pmatrix}.$ 
       Let $\mathfrak{g}=\mathfrak{k}+\mathfrak{p}$ be a Cartan decomposition as in the previous subsection.
       The collection of diagonal matrices $\mathfrak{m}=\{D(h_1, h_2, \ldots, h_n,-h_1, -h_2, \ldots, -h_n,0) :  h_i \in \mathbb{R}\}$ is a Cartan subalgebra of $\mathfrak{0}(2n+1,\mathbb{R})$ and $\mathfrak{m}\subset{\mathfrak{p}}$ that implies vector part of $\mathfrak{m}$ is equal to $\mathfrak{m}$, that is, $\mathfrak{m}^-=\mathfrak{m}$.  Let $e_i$ denote the character on $\mathfrak{m}^{\mathbb{C}}$ defined as $$e_i(D(h_1, h_2, \ldots, h_n,-h_1,-h_2,\ldots, -h_n,0 ))=h_i,\;\;\; \forall \
        1\leq i \leq n.$$ Then the root system associated to $\mathfrak{m}$ is $$\mathbf{R}(\mathfrak{m})=\{\pm(e_i\pm e_j), 1\leq i < j \leq n; \pm e_i , 1 \leq i \leq n\}.$$ The Weyl group of $\mathfrak{g}^{\mathbb{C}}$ corresponding to $\mathfrak{m}^{\mathbb{C}}$ consists of all permutations of $e_i$'s and change of signs of $e_i$'s. This group acts on the root system by permuting the roots and reflecting them across hyperplanes.
Consider the admissible root systems $F(k,l)$ and $F(k,l)'$ defined as follows [\cite{Su}, Equation (79) and Equation (79)']
       \begin{enumerate}
           \item $F(k,l)=\{e_1+e_2,e_1-e_2,\ldots,e_{2k-1}+e_{2k}, e_{2k-1}-e_{2k},e_{2k+1}-e_{2k+l+1}, \ldots,e_{2k+l}-e_{2k+2l}\},$
        where $2k+2l\leq n$, $k\geq 0$ and $l\geq 0$.

        \item $F(k,l)'=\{e_1+e_2,e_1-e_2,\ldots,e_{2k-1}+e_{2k}, e_{2k-1}-e_{2k},e_{2k+1}-e_{2k+l+1}, \ldots,e_{2k+l}-e_{2k+2l}, e_{2k+2l+1}\},$
        where $2k+2l+1\leq n$, $k\geq 0$ and $l\geq 0$.
       \end{enumerate}

       Similarly to the previous section, one can observe that when $n$ is even, the only element of the Weyl group that fixes all admissible root systems is the element that changes the signs of all characters $e_1, e_2, \ldots, e_n$. Furthermore, when $n$ is odd, there are three elements of Weyl group that fix all the admissible root systems. The elements are $(i)$ that change the sign of character $e_n$ only, $(ii)$ that changes the signs of characters $e_1, e_2, \ldots e_{n-1}$, and $(iii)$ that changes the sign of character $e_i$' s for $1 \leq i \leq n$. These observations highlight the dependence of the Weyl group invariance on the structure of the admissible root systems, particularly the presence of the additional root $e_{2k+2l+1}$ of $F(k,l)'$.  From this observation, we get the following outcome.

       \begin{proposition}
            Let $\mathfrak{h_i}$ be the Cartan subalgebra associated to the admissible root system $F_i$ for each $i$ of the Lie algebra $\mathfrak{o}(2n+1,\mathbb{R})$.  When $n$ is odd, there are two non-trivial elements of $K$ that fixes $\mathfrak{h_i}$ for every $i$ other than the non-trivial element $J \in K$. When $n$ is even, there is an element in $K$ that fixes all the Cartan subalgebras.
         \end{proposition}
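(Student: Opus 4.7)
The plan is to follow the strategy used for the $C_n$-case: determine all elements of the Weyl group $W(\mathbf{R})$ that preserve every admissible root system (in the sense of Remark \ref{preserverAdmissibleRootSystem}), lift each such element to $K$ via the identification $W(\mathbf{R})\cong N_K(\mathfrak{m})/Z_K(\mathfrak{m})$, and then apply Theorem \ref{s.s liealgebra}(1) to conclude that the resulting elements of $K$ fix every Cartan subalgebra $\mathfrak{h_i}$. The essential new feature for the $B_n$-case is the presence of both families $F(k,l)$ and $F(k,l)'$, with the latter containing the extra short root $e_{2k+2l+1}$.

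First, I would eliminate all nontrivial permutations of $\{e_1,\ldots,e_n\}$ from consideration. For the admissible system $F(0,l)=\{e_1-e_{l+1},\ldots,e_l-e_{2l}\}$, the associated subspace $\mathfrak{l}$ is the ``antidiagonal'' $l$-plane inside $\mathrm{span}\{H_{e_1},\ldots,H_{e_{2l}}\}$, and its preservation forces any permutation to respect the involution $i\leftrightarrow l+i$; varying $l$ together with the constraints coming from $F(k,0)'$ shows that only pure sign-change elements $\tau:e_i\mapsto\varepsilon_ie_i$ have any chance of preserving every admissible system.

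Next I would analyse the constraints on the sign pattern $(\varepsilon_1,\ldots,\varepsilon_n)$. The pair-blocks $e_{2i-1}\pm e_{2i}$ appearing in $F(k,l)$ and $F(k,l)'$ span the full plane $\mathbb{R}H_{e_{2i-1}}+\mathbb{R}H_{e_{2i}}$ and therefore impose no constraint on $\tau$; the difference terms $e_{2k+i}-e_{2k+l+i}$ for $i=1,\ldots,l$ impose the equalities $\varepsilon_{2k+i}=\varepsilon_{2k+l+i}$; and the extra singleton root $e_{2k+2l+1}$ of $F(k,l)'$ contributes only the line $\mathbb{R}H_{e_{2k+2l+1}}$, which is automatically $\tau$-stable and imposes no further constraint. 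Running these constraints through $F(0,l)$ for $l=1,\ldots,\lfloor n/2\rfloor$ equates $\varepsilon_i$ on all indices $1,\ldots,2\lfloor n/2\rfloor$, while $\varepsilon_n$ remains uncoupled from the rest when $n$ is odd.

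Thus when $n=2m$ the only nontrivial pattern is $(-1,\ldots,-1)$, which lifts to $J\in K$; when $n=2m+1$ three nontrivial patterns survive, namely $(-1,\ldots,-1,-1)$, $(-1,\ldots,-1,+1)$, and $(+1,\ldots,+1,-1)$, lifting to $J$ together with two explicit block-diagonal involutions in $K$ that flip either the last coordinate or all but the last. Applying Theorem \ref{s.s liealgebra}(1) to each lifted element then completes the proof. The main subtlety is to verify carefully that the extra singleton root $e_{2k+2l+1}$ of $F(k,l)'$ — the distinguishing feature of $B_n$ relative to $C_n$ — genuinely introduces no new coupling between $\varepsilon_n$ and the remaining signs when $n$ is odd; it is precisely this decoupling that produces the additional nontrivial Weyl elements and, correspondingly, the additional elements of $K$.
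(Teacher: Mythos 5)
Your proposal is correct and follows essentially the same route as the paper: classify the Weyl group elements preserving every admissible root system $F(k,l)$ and $F(k,l)'$ (via the associated subspaces $\mathfrak{l}$), observe that the pair-blocks impose no constraint, the difference roots force equal signs, and the extra singleton root of $F(k,l)'$ decouples $\varepsilon_n$ exactly when $n$ is odd, then lift the surviving elements to $K$ through $W(\mathbf{R})\cong N_K(\mathfrak{m})/Z_K(\mathfrak{m})$ and apply Theorem \ref{s.s liealgebra}(1). Your write-up is in fact more explicit than the paper's brief "similarly to the previous section" argument, but the underlying analysis is the same.
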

    
\end{subsection}

\begin{subsection}{Real Orthogonal Lie algebra $\mathfrak{g}=\mathfrak{0}(2n,\mathbb{R})$ [$D_n$-type]}

 The Lie algebra $\mathfrak{g}=\mathfrak{o}(n,\mathbb{R})= \{X \in \mathfrak{gl}(2n,\mathbb{R}: X^{t}J+JX=0)\},$ where $J=\begin{pmatrix}
           0_n & I_n\\
           I_n & 0_n
       \end{pmatrix}$.   Let $\mathfrak{g}=\mathfrak{k}+\mathfrak{p}$ be a Cartan decomposition as in the previous subsection.
       The collection of diagonal matrices $\mathfrak{m}=\{D(h_1, h_2, \ldots, h_n,-h_1, -h_2, \ldots, -h_n) :  h_i \in \mathbb{R}\}$ is a Cartan subalgebra of $\mathfrak{0}(2n,\mathbb{R})$ and $\mathfrak{m}\subset{\mathfrak{p}}$, that is, $\mathfrak{m}^-=\mathfrak{m}$.
Let $e_i$ be the character on $\mathfrak{m}^{\mathbb{C}}$ defined as $$e_i(D(h_1, h_2, \ldots, h_n,-h_1,-h_2,\ldots, -h_n ))=h_i,\;\;\; \forall \ 1\leq i \leq n.$$ Then the root system associated to $\mathfrak{m}$ is $$\mathbf{R}(\mathfrak{m})=\{\pm(e_i\pm e_j): 1\leq i < j \leq n\}.$$ The Weyl group of $\mathfrak{g}^{\mathbb{C}}$ corresponding to $\mathfrak{m}^{\mathbb{C}}$ consists of all permutations of $e_i$'s and change of signs of an even number of $e_i$'s.
Consider the admissible root system [\cite{Su}, Equation (74) and Equation (75)] 
           $$F(k,l)=\{(e_1+e_2,e_1-e_2,\ldots,e_{2k-1}+e_{2k}, e_{2k-1}-e_{2k},e_{2k+1}-e_{2k+l+1}, \ldots,e_{2k+l}-e_{2k+2l} )\},$$
        where $2k+2l\leq n$, $k\geq 0$ and $l\geq 0$. If $n$ is even, there exists another type of admissible root system $$F=\{F(0,n-2), e_{n-1}+e_n)\}.$$  

       Now, we do some case-wise analysis. In the case when $n=4$, the number of non-conjugate Cartan subalgebra classes is seven. The following is one of the admissible root systems up to conjugation under the Weyl group $W(R)$. \begin{center}
           $F_1:=F(0,0)=\phi, \ F_2:=F(1,0)=\{e_1+e_2, e_1-e_2\}, \ F_3:=F(2,0)=\{e_1+e_2, e_1-e_2, e_3+e_4, e_3-e_4\},\ F_4:=F(0,1)=\{e_1-e_2\},   \ F_5:=F(0,2)=\{e_1-e_3, e_2-e_4\},  \ F_6:=F(1,1)=\{e_1+e_2, e_1-e_2, e_3-e_4\},  \ F=\{e_1-e_3, e_2-e_4, e_3+e_4\}$
       
       \end{center}
       An easy observation tells us that the Weyl group elements that fix all admissible root systems are the permutation $(12)(34)$ and the element that changes the signs of $e_1, e_2, e_3, e_4$. 

       In the case when $n=5$, the number of non-conjugate Cartan subalgebra classes is six. Observe that $F_1, F_2, F_3, F_4, F_5, F_6$ are the admissible root systems and the Weyl group elements that fix $F_i$, $1 \leq i \leq 6$, are the same as in the case when $n=4$.

       Now we move to the case when $n=6$ (respectively, $n=7$). The number of non-conjugate Cartan subalgebra classes is eleven (respectively, 10). The previous admissible root systems $F_i$ for $1\leq i \leq 6$ also appear in this case. In addition, we have the following admissible root systems $F_i$ for $7\leq i \leq 10$  and $F$. 
       \begin{center}
           $ \ F_7:=F(3,0)=\{e_1+e_2, e_1-e_2, e_3+e_4, e_3-e_4, e_5+e_6, e_5-e_6\}, \ F_8:=F(0,3)=\{e_1-e_4, e_2-e_5, e_3-e_6\},  \ F_{9}:=F(2,1)=\{e_1+e_2, e_1-e_2, e_3+e_4,e_3-e_4, e_5-e_6\}, F_{10}:=F(1,2)=\{e_1+e_2, e_1-e_2, e_3-e_5,e_4-e_6\},  \ F=\{e_1-e_5, e_2-e_6, e_5+e_6\}$,
       
       \end{center}
         where $F$ is not part of the admissible root systems in the case of $n=7$. Again, a simple observation tells us that the only Weyl group that fixes all the admissible root systems for $n=6$ (respectively, $n=7$) is the element that changes signs of $e_1, e_2, e_3, e_4, e_5,e_6$ (respectively, the same element).
         A similar observation tells that for the cases when $n\geq 8$, there exists only one Weyl group that fixes all the admissible root systems. When $n$ is even, the element is which changes the sign of $e_i$'s for all $1\leq i \leq n$, and when $n$ is odd, the element is which changes the sign of $e_i$'s for all $1\leq i \leq n-1$. The following outcomes follow from here.

         \begin{proposition}
            Let $\mathfrak{h_i}$ be the Cartan subalgebra associated to the admissible root system $F_i$ for each $i$ of the Lie algebra $\mathfrak{o}(2n,\mathbb{R})$. Then, for $n\geq 6,$ there exists a unique non-trivial element $J \in K$ that fixes $\mathfrak{h_i}$ for every $i$.
         \end{proposition}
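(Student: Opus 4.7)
The plan is to apply Theorem \ref{s.s liealgebra}(1), which reduces the problem to finding a unique non-trivial element $s$ in the Weyl group $W(\mathbf{R}) \cong W(D_n)$ that fixes (in the sense of Remark \ref{preserverAdmissibleRootSystem}) every admissible root system of $\mathfrak{o}(2n,\mathbb{R})$. Recall that $W(D_n)$ consists of signed permutations of $\{e_1,\ldots,e_n\}$ with an even number of sign changes, and that the admissible root systems are the $F(k,l)$ with $2k+2l\leq n$, together with the extra system $F=\{F(0,n-2),e_{n-1}+e_n\}$ when $n$ is even.

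For existence, the proposed candidate is: when $n$ is even, the total sign change $e_i\mapsto -e_i$ for all $i$; when $n$ is odd, $e_i\mapsto -e_i$ for $i\leq n-1$ together with $e_n\mapsto e_n$. The number of sign changes is $n$ or $n-1$ respectively, and both are even, so the candidate lies in $W(D_n)$. Since $2k+2l\leq n$ and, for $n$ odd, is automatically $\leq n-1$ (its left-hand side is even), the index $n$ never occurs in any $F(k,l)$ when $n$ is odd. Consequently the candidate acts as $-\mathrm{id}$ on every $H_{e_i}$ that generates some $\mathfrak{l}_i$ and hence preserves each $\mathfrak{l}_i$. In the even case $-\mathrm{id}$ on $\mathfrak{m}$ trivially preserves $\mathfrak{l}(F)$ as well.

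The bulk of the work is the uniqueness argument, which I would organise as a cascade of constraints imposed by successively richer admissible root systems. First, the subsystems $F(k,0)$ for $k=1,\ldots,\lfloor n/2\rfloor$ force $s$ to preserve each pair $\{2i-1,2i\}$ setwise and, for $n$ odd, to fix the index $n$. Next, the systems $F(k,1)$ for varying $k$ force the two sign factors on each pair to coincide. Then $F(0,2)$, together with $F(j,2)$ for $j\geq 1$, chain all pairs into a common pattern: $s$ must act either as the identity on every pair with a common sign $\varepsilon$, or as a swap on every pair with a common sign $\varepsilon$. The decisive step is to rule out the swap-on-every-pair family, using that $F(0,3)=\{e_1-e_4,e_2-e_5,e_3-e_6\}$ is admissible precisely when $n\geq 6$: a direct calculation shows that a swap-candidate sends $H_{e_1}-H_{e_4}$ to $\pm(H_{e_2}-H_{e_3})$, which is not contained in $\mathfrak{l}(F(0,3))$. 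Hence only the identity-on-each-pair family survives, and the $W(D_n)$-parity condition picks out the unique non-trivial candidate $s$ described above. A final appeal to Theorem \ref{s.s liealgebra}(1) then produces the desired $J\in K$.

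The principal obstacle, I expect, is the bookkeeping in the uniqueness cascade: one must carefully verify that every non-trivial configuration of swap/identity choices and sign patterns on the pairs is incompatible with at least one admissible root system, and one must explicitly track where the hypothesis $n\geq 6$ becomes indispensable (namely, for the availability of $F(0,3)$, which is exactly what eliminates the extra element $(12)(34)$ that survived for $n\in\{4,5\}$ in the preceding subsection).
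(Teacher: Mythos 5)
Your proposal is correct and follows essentially the same route as the paper: reduce via Theorem \ref{s.s liealgebra}(1) to deciding which Weyl group elements (signed permutations with an even number of sign changes) preserve the subspaces $\mathfrak{l}$ attached to the listed admissible root systems $F(k,l)$ (and $F$ for $n$ even), and identify the total sign change (respectively, the sign change on $e_1,\ldots,e_{n-1}$ for $n$ odd) as the only non-trivial such element. The only difference is that the paper checks this by direct inspection for $n=6,7$ and extends to $n\geq 8$ by ``a similar observation,'' whereas your cascade --- $F(k,0)$ forcing preservation of the pairs, $F(k,1)$ forcing equal signs on each pair, $F(j,2)$ chaining all pairs to a common type and sign, and $F(0,3)$ (available exactly when $n\geq 6$) eliminating the swap family that survives for $n=4,5$ --- gives a uniform argument for all $n\geq 6$, which is, if anything, more complete than the paper's.
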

            
\end{subsection}

\subsection{Lie algebra $\g_2$}

Consider the Lie algebra $\g_2$ over the field $\mathbb{R}$. There exists a Cartan subalgebra $\mathfrak{h}$ in which the toroidal part $\mathfrak{h}^+$ is zero and the vector part is itself. Therefore, there exists $\mathfrak{m} \subset \mathfrak{p}$ (in Cartan decomposition) such that $\mathfrak{h}=\mathfrak{m}$. With respect to the Cartan subalgebra $\mathfrak{m}$, the root space $\mathbf{R}(\mathfrak{m})=\{\alpha, \beta, \beta-\alpha, 2\beta-3\alpha, \beta-2\alpha, \beta-3\alpha,-\alpha, -\beta, -(\beta-\alpha), -(2\beta-3\alpha), -(\beta-2\alpha), -(\beta-3\alpha)\}$, where $\alpha$ is a short root and $\beta$ is a long root. Consider the following admissible root systems:\begin{center}
    $F_1=\{\phi\}$, \ $F_2=\{\alpha\}$, \ $F_3=\{\beta\}$, \ $F_4=\{\alpha,\beta\}$
\end{center}   (see page 429 type G1 of \cite{Su}). The subgroup $H_1(\mathbf{R})$ of $W(\mathbf{R})$ that fixes $\{F_2, -F_2\}$ is generated by the reflections $\sigma_{\alpha}, \sigma_{2\beta-3\alpha}$ (clear from the diagram given on page 429 of \cite{Su}). Similarly, the subgroup $H_2(\mathbf{R})$ of $W(\mathbf{R})$ that fixes $\{F_3, -F_3\}$ is generated by the reflections $\sigma_{\beta}, \sigma_{\beta-2\alpha}$. It is easy to see that $H_1(\mathbf{R})\cap H_2(\mathbf{R})=\{e,r^3\}$, where $e$ is the identity element and $r$ is the rotation by $60$ degree of $W(\mathbf{R})$. Then, the following result follows from Theorem \ref{s.s liealgebra}.
\begin{proposition}
    The subgroup $H$ of the Weyl group (isomorphic to the dihedral group $D_6$) that fixes all Cartan subalgebras of the Lie algebra $\g_2$ with respect to $F_1, F_2, F_3, F_4$, is the center of the Weyl group.
\end{proposition}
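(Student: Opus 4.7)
The plan is to combine Theorem \ref{s.s liealgebra} with the stabilizer data already laid out in the paragraph just before the proposition. By parts (1) and (2) of that theorem, an element of $W(\mathbf{R})$, via its identification with $N_K(\mathfrak{m})/Z_K(\mathfrak{m})$, fixes each of the Cartan subalgebras $\mathfrak{h}_1,\mathfrak{h}_2,\mathfrak{h}_3,\mathfrak{h}_4$ if and only if it fixes each of the admissible root systems $F_1,F_2,F_3,F_4$ in the sense of Remark \ref{preserverAdmissibleRootSystem}. Hence the subgroup $H$ can be computed inside $W(\mathbf{R})$ purely combinatorially, and the goal reduces to showing that this subgroup coincides with $\{e,r^3\}$ and then identifying $\{e,r^3\}$ with the center of $D_6$.

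The next step is to dispose of the trivial cases $F_1$ and $F_4$. For $F_1=\emptyset$ the associated subspace $\mathfrak{l}_{F_1}$ is zero, so every Weyl element fixes $F_1$. For $F_4=\{\alpha,\beta\}$, since $\{\alpha,\beta\}$ is a basis of the dual of $\mathfrak{m}$ (recall $\g_2$ has rank two), one has $\mathfrak{l}_{F_4}=\R H_\alpha+\R H_\beta=\mathfrak{m}$; as every $\sigma\in W(\mathbf{R})$ preserves $\mathfrak{m}$, it automatically fixes $F_4$. Therefore $H$ equals the intersection of the $F_2$- and $F_3$-stabilizers, which by the excerpt are $H_1(\mathbf{R})=\langle \sigma_\alpha,\sigma_{2\beta-3\alpha}\rangle$ and $H_2(\mathbf{R})=\langle \sigma_\beta,\sigma_{\beta-2\alpha}\rangle$ respectively. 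The text has already computed $H_1(\mathbf{R})\cap H_2(\mathbf{R})=\{e,r^3\}$, giving $H=\{e,r^3\}$.

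Finally I would verify that $\{e,r^3\}$ is the center of $W(\mathbf{R})\cong D_6$. Realizing $D_6$ as the symmetry group of a regular hexagon, the rotation $r^3$ by $180^\circ$ commutes with every rotation, and for any reflection $s$ one computes $s r^3 s^{-1}=r^{-3}=r^3$, so $r^3$ is central. No other rotation $r^k$ with $1\le k\le 5$, $k\ne 3$, is central since $s r^k s^{-1}=r^{-k}\ne r^k$, and reflections are not central either as they form a conjugacy class of size greater than one. Thus $Z(W(\mathbf{R}))=\{e,r^3\}=H$, yielding the proposition. The substantive step is the reduction to the intersection $H_1(\mathbf{R})\cap H_2(\mathbf{R})$, which the preceding discussion has essentially carried out; the one point requiring attention is the observation that $\mathfrak{l}_{F_4}=\mathfrak{m}$, which is why $F_4$ imposes no constraint. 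I do not anticipate a serious obstacle beyond this bookkeeping.
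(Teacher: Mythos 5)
Your proposal is correct and follows essentially the same route as the paper: both arguments reduce the problem to the intersection $H_1(\mathbf{R})\cap H_2(\mathbf{R})=\{e,r^3\}$ recorded just before the proposition (with $F_1$ imposing no condition) and then identify $\{e,r^3\}$ with the center of the dihedral Weyl group $D_6$. The only cosmetic difference is the treatment of $F_4$: you observe that $\mathfrak{l}_{F_4}=\mathbb{R}H_\alpha+\mathbb{R}H_\beta=\mathfrak{m}$, so in the sense of Remark \ref{preserverAdmissibleRootSystem} every Weyl element fixes $F_4$ and it imposes no constraint, whereas the paper only checks that $r^3$ fixes $\{F_4,-F_4\}$; both observations play the same role in the argument.
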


\begin{proof}
    The proof follows from the above description and the following observation. We know that the stabilizer of $\{F_1,-F_1\}$ is the Weyl group $W(\mathbf{R})$. A simple calculation shows that the center $\{e,r^3\}$ of $W(\mathbf{R})$ fixes the set $\{F_4,-F_4\}$ (that is, fixes the admissible root system $f_4$). This proves the theorem.
\end{proof}
\begin{remark}
 It may be an independent interest to find out all the elements of $K$ that fix all Cartan subalgebras $\mathfrak{h_1},\cdots,\mathfrak{h_n}$, taking a representative of each distinct conjugacy class of Cartan subalgebras for all other exceptional types of Lie algebras.
\end{remark}

\section{On characteristic subgroups of Lie groups}

In this section, we prove Theorem \ref{intersection with sub}. We begin with the following lemma.

\begin{lemma}\label{maximal nilpotent}
    Let $G$ be a connected solvable Lie group with nilradical $N$. Let $A$ be a maximal connected nilpotent subgroup of $G$ such that $G=AN$. Then $A$ is a Cartan subgroup of $G$.
\end{lemma}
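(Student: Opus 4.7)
My plan is to reduce to the Lie algebra setting and invoke the classical characterization that a subalgebra $\mathfrak{h}$ of $\mathfrak{g}$ is a Cartan subalgebra if and only if $\mathfrak{h}$ coincides with the generalized zero weight space $\mathfrak{g}_{0}(\mathfrak{h})$ for $\mathrm{ad}(\mathfrak{h})$. Writing $\mathfrak{a} = \mathrm{Lie}(A)$, $\mathfrak{n} = \mathrm{Lie}(N)$ and $\mathfrak{b} = \mathfrak{g}_{0}(\mathfrak{a})$, I will show that $\mathfrak{b}$ is a nilpotent subalgebra containing $\mathfrak{a}$; maximality of $A$ among connected nilpotent subgroups then forces the analytic subgroup of $\mathfrak{b}$ to equal $A$, so $\mathfrak{a} = \mathfrak{b}$. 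The Chevalley correspondence between Cartan subgroups and Cartan subalgebras recalled at the beginning of Section~3 identifies the connected $A$ with the unique Cartan subgroup of $G$ having Lie algebra $\mathfrak{a}$.

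Structurally, the hypothesis $G = AN$ gives $\mathfrak{g} = \mathfrak{a} + \mathfrak{n}$, and solvability of $\mathfrak{g}$ forces $[\mathfrak{g},\mathfrak{g}] \subset \mathfrak{n}$, so $\mathfrak{g}/\mathfrak{n}$ is abelian. Because $\mathfrak{n}$ is an ideal, it is $\mathrm{ad}(\mathfrak{a})$-stable and respects the generalized weight decomposition of $\mathfrak{g}$ induced by $\mathfrak{a}$; projecting $\mathfrak{g} = \mathfrak{a} + \mathfrak{n}$ onto the zero weight summand gives $\mathfrak{b} = \mathfrak{a} + (\mathfrak{b} \cap \mathfrak{n})$. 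Here $\mathfrak{b} \cap \mathfrak{n}$ is a nilpotent ideal of $\mathfrak{b}$, and the quotient $\mathfrak{b}/(\mathfrak{b} \cap \mathfrak{n})$ embeds into the abelian $\mathfrak{g}/\mathfrak{n}$, so $\mathfrak{b}$ is already solvable; the real content is to upgrade this to nilpotency.

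For that, I apply Lie's theorem (after a harmless complexification) to the solvable algebra $\mathfrak{b}$ acting on $\mathfrak{b} \cap \mathfrak{n}$ by $\mathrm{ad}$. Any weight $\chi\colon \mathfrak{b} \to \mathbb{C}$ appearing in the resulting flag vanishes on $\mathfrak{a}$, because $\mathrm{ad}(A)$ is nilpotent on $\mathfrak{g}_{0}(\mathfrak{a}) = \mathfrak{b}$ for every $A \in \mathfrak{a}$; and vanishes on $\mathfrak{b} \cap \mathfrak{n}$, because every element of the nilradical acts nilpotently on $\mathfrak{g}$ (the induced map on the abelian $\mathfrak{g}/\mathfrak{n}$ is trivial and the action on $\mathfrak{n}$ is nilpotent). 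Since $\mathfrak{b} = \mathfrak{a} + (\mathfrak{b} \cap \mathfrak{n})$, this forces $\chi \equiv 0$, so $\mathfrak{b}$ acts nilpotently on $\mathfrak{b} \cap \mathfrak{n}$; combined with the trivial adjoint action on the abelian quotient $\mathfrak{b}/(\mathfrak{b} \cap \mathfrak{n})$, this makes $\mathrm{ad}(X)|_{\mathfrak{b}}$ nilpotent for every $X \in \mathfrak{b}$, and Engel's theorem yields nilpotency of $\mathfrak{b}$. The analytic subgroup of $\mathfrak{b}$ in $G$ is then a connected nilpotent subgroup containing $A$, and maximality closes the argument. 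The principal obstacle is exactly this nilpotency step: a typical $X \in \mathfrak{b}$ only splits as a sum of an $\mathrm{ad}$-nilpotent element of $\mathfrak{a}$ and an $\mathrm{ad}$-nilpotent element of $\mathfrak{b} \cap \mathfrak{n}$ whose bracket need not vanish, so $\mathrm{ad}(X)$ is not visibly nilpotent on the nose; the Lie's-theorem trick bypasses this by forcing all weights to vanish, and it is precisely at this point that the supplement hypothesis $G = AN$ enters decisively.
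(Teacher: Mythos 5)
Your argument is correct, but it takes a genuinely different route from the paper. The paper's own proof is three lines long and leans entirely on the solvable-group machinery of \cite{MS21}: Lemma 3.1 there shows that for a connected nilpotent subgroup $A$ of a connected solvable $G$ with $G=AN$ the normalizer $N_G(A)$ is again connected and nilpotent, so maximality forces $N_G(A)=A$, and Corollary 3.2 (or Remark 3.3) of \cite{MS21} then identifies such a self-normalizing $A$ as a Cartan subgroup. You instead argue infinitesimally: you form the Fitting subalgebra $\mathfrak b=\mathfrak g_0(\mathfrak a)$, use $\mathfrak g=\mathfrak a+\mathfrak n$ (which is exactly where the hypothesis $G=AN$ enters, as you note --- without it the lemma is false, e.g. for the nilradical of the universal cover of the Euclidean motion group) to get $\mathfrak b=\mathfrak a+(\mathfrak b\cap\mathfrak n)$, kill all Lie-theorem weights of $\mathfrak b$ on $\mathfrak b\cap\mathfrak n$ since both $\mathfrak a$ and $\mathfrak b\cap\mathfrak n$ act nilpotently, and conclude nilpotency of $\mathfrak b$ via $[\mathfrak b,\mathfrak b]\subset\mathfrak b\cap\mathfrak n$ and Engel; maximality of $A$ then yields $\mathfrak a=\mathfrak b=\mathfrak g_0(\mathfrak a)$, so $\mathfrak a$ is a Cartan subalgebra. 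This is in effect a self-contained reproof of the infinitesimal content of the quoted results of \cite{MS21}; what it buys is independence from that reference, at the cost of length, while the paper's citation-based proof is shorter but opaque without \cite{MS21} in hand. Two small points to patch in your write-up: to apply maximality you should note that the analytic subgroup with Lie algebra $\mathfrak b$ is a nilpotent \emph{group} (it is a quotient of the simply connected nilpotent group with Lie algebra $\mathfrak b$ by a discrete central subgroup); and Chevalley's correspondence by itself only gives that $A$ is the identity component of the unique Cartan subgroup $H$ with $L(H)=\mathfrak a$, so to conclude $H=A$ you must also invoke the fact, used elsewhere in the paper (proof of Lemma \ref{connected component}), that Cartan subgroups of connected solvable Lie groups are connected.
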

\begin{proof}
    By Lemma 3.1 of \cite{MS21}, $N_G(A)$ is a connected nilpotent subgroup of $G$. Since $A$ is maximal, we have $N_G(A)=A$. By Corollary 3.2 of \cite{MS21} (or Remark 3.3 of \cite{MS21}), we conclude that $A$ is a Cartan subgroup of $G$.
\end{proof}

\begin{proof} [Proof of Theorem \ref{intersection with sub}:]
Let $R$ be the radical and $N$ the nilradical of $G$. For a given $\Gamma$-stable Cartan subgroup $H$ of $G$, we construct a $\Gamma$-stable Cartan subgroup $H_M$ of $M$ such that $H\cap M\subset H_M$. To enhance the readability of the proof, we divide the proof into the following cases.\\

\noindent{\textbf{Case 1:}} Suppose $M$ is semisimple. Then $G=MZ_G(M)^0$. Any Cartan subgroup of $G$ is of the form $H:=H_MH_{Z_G(M)^0}.$ Therefore, $H\cap M=H_M$, and it follows that $H\cap M$ is $\Gamma$-stable.\\ 

\noindent{\textbf{Case 2:}} Suppose $M$ is the radical $R$ of $G$. Then we have $R=(H\cap R)N$ by Corollary 4.1(2) of \cite{MS21}. Since $H$ is $\Gamma$-stable, $H\cap R$ is too. In view of   \cite[Proposition 3.1]{MS21}, we construct a Cartan subgroup of $R$ containing $H\cap R$. Set $L_1=N_R(H\cap R)$. By \cite[Proposition 3.1]{MS21}, $L_1$ is connected and clearly $\Gamma$-stable. Consequently, we obtain a sequence of connected $\Gamma$-stable subgroups $\{L_{i+1}\}$ of $G$ such that $$H\cap R\subset L_1\subset L_2\subset\cdots,$$ where $L_{i+1}:=N_R(L_i)$. Note that this series terminates after finitely many steps, as $G$ has finite dimension. Proposition 3.1 of \cite{MS21} assures that the final subgroup in this sequence is a Cartan subgroup of $R$, which we denote by $H_R$. Clearly, $H_R$ is $\Gamma$-stable. This proves the result if $M=R$. \\

\noindent{\textbf{Case 3:}} Suppose $M$ is contained in the radical $R$. Then $M=(H\cap M)R_N$, where $R_N$ is the nilradical of $M$. Note that $H\cap R$ and $R_N$ are $\Gamma$-stable. Proceeding exactly as in Case 2 then yields the desired result.\\

\noindent{\textbf{Case 4:}} Suppose $M$ contains the radical $R$. Let $S$ be a Levi subgroup of $G$ such that $H=(H\cap S)(H\cap R)$. Set $J:=(S\cap M)^0$. Then $M=JR$, and $J$ is a connected normal subgroup of $S$. By \cite{I}, we have $S=JZ_S(J)^0$ (an almost direct product). Accordingly, it is easy to see that $H_S:=H\cap S=H_JH_{Z_S(J)^0}$, where  $H_J$ and $H_{Z_S(J)^0}$ are Cartan subgroups of $J$ and $Z_S(J)^0,$ respectively.  
    Since $H$ is $\Gamma$-stable, it follows that $H_J(H\cap R)$, and $H_J^0(H\cap R)$ is also $\Gamma$-stable. 
    Note that $H_J^0R$ is a closed connected solvable Lie group and $H_J^0R=H_J^0(H\cap R)N$. Since $H_J^0R$ and $H_J^0(H\cap R)$ are $\Gamma$-stable, there exits a $\Gamma$-stable Cartan subgroup $H'$ of $H_J^0R$ such that $H_J^0(H\cap R)\subset H'$ by Case 2. Note that $H'$ is a maximal connected nilpotent subgroup of $H_J^0R$. Now, we set $H_M:=H'Z_M(H')$. Clearly $H_M$ is $\Gamma$-stable.
    
    \vspace{0.2 cm}
    
\noindent
{\it Claim:} $H_M$ is a Cartan subgroup of $M$.
\vspace{0.2 cm}
    
\noindent
To prove our claim, we show that $H_M=H_JH_{Z_R(H_J)}$ for some Cartan subgroup $H_{Z_R(H_J)}$ of the closed connected subgroup $Z_R(H_J)$ of $R$ (Proposition 3.6 of \cite{MS21}). Note that, $H'=H_J^0(H'\cap R)$.
\vspace{0.2 cm}

\noindent{\it Step 1:} Note that $H'$ is a maximal connected nilpotent subgroup of $M$. Indeed, since $H'$ is a Cartan subgroup of $H_J^0R$, it follows that $H'$ is a Cartan subgroup of $H'R$ by Lemma \ref{connected component}. Now, suppose $H''$ is a maximal connected nilpotent subgroup of $M$ containing $H'$, then we have $H_J^0R/R=H'R/R\subset H''R/R$. Since $H_J^0R/R$ is a maximal connected nilpotent in $M/R$, it implies that $H'R/R=H''R/R.$ Therefore,
we have $H'\subset H''\subset H'R$. As $H'$ is a Cartan subgroup of $H'R$, it is a maximal nilpotent group, implying that $H'=H''$.
\vspace{0.2 cm}

\noindent
{\it Step 2:} Now we prove $H'\cap R\subset Z_R(H_J^0).$
Note that $[H',H']\subset H'\cap N$, and $H_J^0$ acts trivially on $(H'\cap R)/(H'\cap N)$. Since ${\rm Ad}(H_J^0)$ consisting of semisimple elements, we get that $H'=H_J^0Z_{H'\cap R}(H_J^0)(H'\cap N)$.
Therefore, to prove our assertion, it is enough to show that $H'\cap N\subset Z_N(H_J^0)$, in other words, the action of $H_J^0$ on $H'\cap N$ is trivial. Since $H'\cap R\subset Z_R(H_J^0)$, and $\overline{[H',H']}\subset (H'\cap N).$
Let $$\pi:(H'\cap N)\to (H'\cap N)/\overline{[H',H']}$$  be the projection map. Note that $\overline{[H', H']}$ is a proper closed connected subgroup of the connected group $H'\cap N$. Indeed, if $\overline{[H',H']}=(H'\cap N)$, then it follows that $\overline{[H',[H',H']]}$ is equal to $H'\cap N$, contradicting that $\overline{H'}$ is a connected solvable subgroup. Clearly, $H_J^0$ commutes with $H'\cap N$ modulo $\overline{[H',H']}.$ Therefore, we have
$$\pi^{-1}(Z_{(H'\cap N)/[H', H']}(H_J^0))=Z_{H'\cap N}(H_J^0)\overline{[H',H']},$$ by Proposition 3.4 of \cite{MS21}. This implies that $H'\cap N=Z_{(H'\cap N)}(H_J^0)\overline{[H', H']}.$ As $\overline{[H', H']}\subset (H'\cap N)$ is strictly of lower dimension by repeating the argument, we conclude that $H'\cap N=Z_{H'\cap N}(H_J^0).$ 
Therefore, $$H'\cap R=Z_{H'\cap R}(H_J^0)(H'\cap N)\subset Z_R(H_J^0)Z_N(H_J^0)=Z_R(H_J^0),$$ as required.
\vspace{0.2 cm}

\noindent
{\it Step 3:} In this step, we show that $H'\cap R$ is a Cartan subgroup of $Z_R(H_J^0)=Z_R(H_J)$ (see Lemma 3.9 of \cite{MS21}). Let $\tilde H$ be a maximal nilpotent subgroup of $Z_R(H_J^0)$ containing  $H'\cap R.$ Then $H_J^0\tilde H$ is a maximal nilpotent subgroup of $H_J^0R$. Since $H'$ is a Cartan subgroup of $H_J^0R$, we have $H'=H_J^0\tilde H$. This proves that $H'\cap R$ is a maximal nilpotent subgroup of $Z_R(H_J^0).$ Since $Z_R(H_J^0)=(H'\cap R)(N\cap Z_R(H_J^0))$ and $H'\cap R$ is maximal nilpotent, by Lemma \ref{maximal nilpotent}, it follows that $H'\cap R$ is a Cartan subgroup of $Z_R(H_J^0)$.
\vspace{0.2 cm}

\noindent
{\it Step 4:} By Step 3, there exists a Cartan subgroup $H_{Z_R(H_J)}$ of $Z_R(H_J)$ such that $H'\cap R=H_{Z_R(H_J)}$. Hence $H'=H_J^0(H'\cap R)=H_J^0H_{Z_R(H_J^0)}=H_J^0H_{Z_R(H_J)},$ and that $H_JH_{Z_R(H_J)}$ forms a Cartan subgroup of $M$. Next, we show $$H_JH_{Z_R(H_J)}=Z_M(H')H'.$$ Since $Z_J(H_J^0)=H_J$,  we have $Z_M(H')H'=Z_{JR}(H')H'\subset H_JH'=H_J(H'\cap R)=H_JH_{Z_R(H_J)}.$ As $H'\cap R\subset Z_R(H_J)$ and $Z_J(H_J^0)=H_J$ as $J$ is semisimple, it follows that $H_J\subset Z_M(H')$, and hence $H_JH_{Z_R(H_J)}\subset Z_M(H')H'$. Combining both inclusions, we obtain the desired equality. 
\vspace{0.2 cm}

\noindent
 Finally, note that $H\cap M=H_J(H\cap R)\subset H_M$. This proves the result in Case 4.
\vspace{0.2 cm}

 \noindent{\textbf{Case 5:}} Suppose $M$ is neither semisimple nor solvable. Then $M$ has a non-trivial radical, say $R_M$. Since $R_M$ is a characteristic subgroup of $M$, and $M$ is normal in $G$, we have $R_M\subset R$, and $R_M$ is $\Gamma$-stable. Consequently, $H\cap R_M$ is $\Gamma$-stable. Let $S$ be a Levi subgroup of $G$ such that $H=(H\cap S)(H\cap R)$, and set $J:=(M\cap S)^0$. Then $M=JR_M$. Let $H_J$ be a Cartan subgroup of $J$. Then $H\cap S=H_JH_{Z_J(S)^0}$. Note that both $H_J^0R_M$ and $H_JM$ are closed subgroups of $M$.
 Now we consider the $\Gamma$-stable Lie subgroup $H_J^0R_M$. Clearly, $H\cap M\subset H_J(H\cap R_M)$, and $(H\cap R_M)$ is a $\Gamma$-stable and connected. Since $H_J^0(H\cap R_M)$ is $\Gamma$-stable, and $H_J^0R_M=H_J^0(H\cap R_M)R_N$, where $R_N$ is the nilradical of $M$, there exists a $\Gamma$-stable Cartan subgroup $H'$ of $H_J^0R_M$ containing 
   $H_J^0(H\cap R_M)$. Arguing as in the proof of Case 4 in the group $M=JR_M$, we conclude that $$H_M:=Z_M(H')H'=H_JH_{Z_{R_M}(H_J)},$$ is a $\Gamma$-stable Cartan subgroup of $M$, which contains $H\cap M=H_J(H\cap R_M).$ This proves the result.

\end{proof}

Now we provide an example of a $\Gamma$-stable Cartan subgroup $H_M$ of $M$ in which $H\cap M$ is properly contained.
\begin{example}
	Let $G=(SL(2,\mathbb R)\times SL(2,\mathbb R))\ltimes (\mathbb R^2\oplus \mathbb{H})$ be a group defined by the action of $SL(2,\mathbb R)\times SL(2,\mathbb R)$ on $\mathbb R^2\oplus \mathbb{H}$, given by $$(A,B)(u,v):=(Au, Bv),\;\forall A, B\in SL(2,\mathbb R),\;\; u\in\mathbb R^2,\;\; v\in \mathbb{H}.$$
    Here, the $3$-dimensional Heisenberg group  \begin{center}
	$\mathbb{H}=\Big\{ \begin{pmatrix} 1 & a & c\\
		0 & 1 & b\\
		0 & 0 & 1
	\end{pmatrix}: a, b, c \in \mathbb{R} \Big\}. $
\end{center} 
The action of $SL(2,\mathbb R)$ on the nilpotent group $\mathbb H$ is canonically defined as a group of automorphisms of $\mathbb H$ such that the center $Z=Z(\mathbb H)$ of $\mathbb H$ is point-wise fixed and the quotient action on $\mathbb H/Z$ ,
which we view as $\mathbb R^2$, is the usual linear action of $SL(2,\mathbb R)$ on $\mathbb R^2.$ More precisely,
the action of $SL(2,\R)$ on $\mathbb{H}$ is given by
\begin{center}
$	\begin{pmatrix}
		x_{11} & x_{12}\\
		x_{21} & x_{22}
	\end{pmatrix} \begin{pmatrix}
	1 & a & c\\
	0 & 1 & b\\
	0 & 0 & 1
\end{pmatrix}= \begin{pmatrix}
1 & x_{11}a+x_{12}b & \frac{1}{2}(x_{11}a+x_{12}b)(x_{21}a+x_{22}b)+c-\frac{1}{2} ab\\
0 & 1 & x_{21}a+x_{22}b\\
0 & 0 & 1
\end{pmatrix} .$
\end{center} 
Let $M=SL(2,\mathbb R)\ltimes (\mathbb R^2\oplus \mathbb{H})$, where the action of $SL(2,\mathbb R)$ on $\mathbb R^2\oplus \mathbb{H}$ is given by $B(u,v):=(u, Bv),\;\;\forall \; B\in SL(2,\mathbb R),\;\; u\in\mathbb R^2,\;\; v\in \mathbb{H} .$ Let $\Gamma$ denote the group of automorphisms of $G$. Note that $M$ is a $\Gamma$-stable (non-solvable, non-semisimple) closed connected normal subgroup of $M$. Let $H_1\times H_2$ be any Cartan subgroups of $SL(2,\mathbb R)\times SL(2,\mathbb R)$. The radical $R$ of $G$ is $R:=\mathbb R^2\oplus \mathbb{H}$, and $Z_R(H_1\times H_2)=(\{0\},Z)$, where $Z:=Z(\mathbb{H})$. Since the center of $\mathbb{H}$ is abelian, $(\{0\},Z)$ is a Cartan subgroup of $Z_R(H_1\times H_2)$. Therefore $$H:=H_1\times H_2\times (\{0\}\oplus Z)$$  is a Cartan subgroup of $G$, and hence $H\cap R:=(\{0\},Z)$. Note that a Levi subgroup of $G$ is $S=SL(2,\mathbb R)\times SL(2,\mathbb R)$ and $H=(H\cap S)(H\cap R)$.
	Let $H_2$ be a Cartan subgroup of $SL(2,\mathbb R)$ ( the second component of the semisimple part of $G$). Note that $Z_R(H_2)=\mathbb R^2\oplus Z$, where $Z=Z(\mathbb H)$. Then $$H_M:=H_2\times (\mathbb R^2\oplus Z)$$ is a Cartan subgroup of $M$. 
	Also, we have $H\cap M=H_2\times (\{0\}\oplus Z).$
	 Clearly,
	$H\cap M\subset H_M,$ and $H_M$ is $\Gamma$-stable, follows from  Theorem \ref{intersection with sub} (see Case 4).
\end{example}

\noindent
{\bf Acknowledgements:}
The second author gratefully acknowledges support from the SRIC grant at IIT Roorkee. The third author’s research is supported by the NBHM Postdoctoral Fellowship (PDF no. 0204/27/(29)/2023/R\&D‑II/11930).
 We would like to thank Riddhi Shah for her encouragement of this project and for many useful discussions and comments, especially in the proof of Theorem \ref{intersection with sub}, which improved the statement from our earlier version.

\medskip

\noindent
{\bf Conflict of interest:}
On behalf of all authors, the corresponding author states that there is no conflict of interest.

    \begin{flushleft}
Parteek Kumar and Arunava Mandal\\
Department of Mathematics\\
Indian Institute of Technology Roorkee\\
Uttarakhand 247667, India\\

\medskip
E-mail: {\tt parteek$\textunderscore$k@ma.iitr.ac.in} and {\tt arunava@ma.iitr.ac.in} 

\medskip

Shashank Vikram Singh\\
Department of Mathematics\\
Indian Institute of Science Education and Research, Mohali \\
Knowledge City, Sector -81, Mohali, Punjab.-140306, India\\

\medskip
E-mail: {\tt shashank@iisermohali.ac.in} 

\end{flushleft}

\end{document}